\newcommand{\ie}{\emph{i.e.}}
\newcommand{\eg}{\emph{e.g.}}
\newcommand{\cf}{\emph{cf}}
\newcommand{\Real}{\mathbb{R}}
\newcommand{\Nat}{\mathbb{N}}
\newcommand{\esssup}{\mathop{\mathrm{ess\;\!sup}}}
\newcommand{\eps}{\varepsilon}
\newcommand{\Hilbert}{\mathcal{H}}
\newcommand{\Dom}{\mathfrak{D}}
\newcommand{\Ran}{\mathfrak{R}}
\newcommand{\dmn}{\mathsf{dim}}
\newcommand{\cdmn}{\mathsf{codim}}
\newcommand{\sii}{L^2}
\newcommand{\pa}{\partial}
\newtheorem{Theorem}{Theorem}
\newtheorem{Lemma}{Lemma}
\newtheorem{prop}{Proposition}
\theoremstyle{definition}
\newtheorem{definition}{Definition}
\newtheorem{Remark}{Remark}
\begin{document}
%
\title{\textbf{\Large
Location of the essential spectrum in curved quantum layers%
}}

\author{David Krej\v{c}i\v{r}{\'\i}k
\\
{\small Department of Theoretical Physics,}
\\
{\small Nuclear Physics Institute ASCR,}
\\
{\small 25068 \v{R}e\v{z}, Czech Republic}
\\ 
{\small krejcirik@ujf.cas.cz}
\bigskip \\
Zhiqin Lu
\\ 
{\small Department of Mathematics,}
\\ 
{\small University of California,}
\\
{\small Irvine, CA 92697, USA}
\\ 
{\small zlu@uci.edu}
} 

\date{\small 12 November 2012}
\maketitle

\begin{abstract}
\noindent
We consider the Dirichlet Laplacian
in tubular neighbourhoods of complete non-compact Riemannian 
manifolds immersed in the Euclidean space.
We show that the essential spectrum coincides with
the spectrum of a planar tube
provided that the second fundamental
form of the manifold vanishes at infinity and  the transport of the cross-section along the manifold
is  asymptotically parallel.
For low dimensions and codimension, the result applies
to the location of propagating states in nanostructures
under physically natural conditions.
\end{abstract}
%

\section{Introduction}
%
Consider a non-relativistic quantum particle constrained
to move in a vicinity~$\Omega$ of a submanifold~$\Sigma$ of
dimension~$\dmn$ in the Euclidean space of dimension $\dmn+\cdmn$. 
Throughout this paper, we assume that $\Omega$ is a fiber bundle over $\Sigma$ with fiber $\omega$, where $\omega$ is a bounded domain of $\mathbb R^{\cdmn}$.
Assuming hard-wall boundary conditions on~$\partial\Omega$,
the quantum Hamiltonian can be identified (in suitable units) 
with the Dirichlet Laplacian $-\Delta_D^\Omega$ on $\sii(\Omega)$.
The eigenfunctions and eigenvalues of $-\Delta_D^\Omega$ 
represent particle \emph{bound states} 
(\ie\ stationary solutions of the Schr\"odinger equation)
and their energies, respectively. 
However, for unbounded geometries the Schr\"odinger equation
also admits \emph{propagating/scattering states},
mathematically described by (a subset of) the essential spectrum 
of $-\Delta_D^\Omega$.
The results of the present paper 
(dealing with a more general setting) 
imply a precise location of the essential
spectrum $-\Delta_D^\Omega$ under the condition that~$\Sigma$ 
is a complete Riemannian manifold for which 
the \emph{second fundamental form vanishes at infinity}
and the translation of a uniform cross-section~$\omega$ of~$\Omega$
along~$\Sigma$ is \emph{asymptotically parallel}. 

The conceptual model described above turns out to be 
a very good approximation for physical Hamiltonians
in \emph{nanostructures} 
(we refer to the survey~\cite{LCM} with many references).
Here the realistic cases $(\dmn,\cdmn)\in\{(1,1),(1,2),(2,1)\}$
are typically referred to as \emph{quantum strips},
\emph{quantum tubes} and \emph{quantum layers}, respectively.
The realizations with $\dim=1$ are also sometimes 
called \emph{quantum wires} and for all the systems 
the term \emph{quantum waveguides} is often used.
In order to emphasize the non-trivial nature of the geometry
of the underlying manifold, in this paper we adopt 
the term ``quantum layers''. 
However, arbitrary values $\dmn \geq 1$ and $\cdmn \geq 1$
are allowed here.
Alternative physical justifications for considering 
the quantum Hamiltonian $-\Delta_D^\Omega$
(including high values of~$\dmn$ and arbitrary values of~$\cdmn$)
are given in terms of effective models for molecular dynamics
and quantization on submanifolds
(we refer to a recent work~\cite{Wachsmuth-Teufel}
with many references).

Let us now assume that 
the second fundamental form of~$\Sigma$ vanishes at infinity
and that the transport of~$\omega$ along~$\Sigma$ 
is asymptotically parallel.
Then, using the Fermi coordinates based on~$\Sigma$,
it is possible to check that the layer~$\Omega$ represents 
a local deformation of the product manifold $\Sigma\times\omega$, 
in the sense that the difference of 
the two corresponding metrics becomes close at infinity.  
Since the essential spectrum is expected to be determined by
the behaviour of the metric at infinity only,
it is natural to \emph{conjecture} that 
\begin{equation}\label{exp1}
  \sigma_\mathrm{ess}(-\Delta_D^\Omega)
  = \overline{
  \bigcup_{k=1}^\infty 
  \big(
  E_k+\sigma_\mathrm{ess}(-\Delta_g)
  \big)
  }
  \,.
\end{equation}
Here~$-\Delta_g$ denotes the Laplace-Beltrami operator 
of~$\Sigma$ equipped with a Riemannian metric~$g$
and 
\begin{equation*}
  	E_1 < E_2 \leq E_3 \leq \dots
\end{equation*}
are the discrete eigenvalues 
of the cross-section Dirichlet Laplacian 
$-\Delta_D^\omega$ on $\sii(\omega)$. 
Moreover, since the vanishing of the second fundamental form
at infinity implies that~$\Sigma$ ``looks Euclidean'' 
on enlarging balls ``localized at infinity'',
it is \emph{expectable} that
\begin{equation}\label{exp2}
  \sigma_\mathrm{ess}(-\Delta_g)
  = [0,\infty)
  \,.
\end{equation}
Indeed, the right hand side is the essential spectrum 
of the Laplacian in $\Real^\dmn$.
Summing up, having~\eqref{exp1} and~\eqref{exp2},
one could conclude with the ultimate result
\begin{equation}\label{exp3}
  \sigma_\mathrm{ess}(-\Delta_D^\Omega)
  = [E_1,\infty)
  \,.
\end{equation}
This is a prototype of the results we establish in this paper.

\subsection{Previous results and our strategy}
%
It is not difficult to establish~\eqref{exp3} provided that
one is ready to impose some extra conditions about the decay of
the derivative of the second fundamental form of~$\Sigma$ at infinity.
Indeed, to show that the point $ \lambda \in [E_1,\infty)$ belongs to
the essential spectrum of $-\Delta_D^\Omega$, 
according to the Weyl criterion,
one needs to find a normalized sequence of functions~$\psi_n$ 
from the domain of $-\Delta_D^\Omega$ such that 
$(-\Delta_D^\Omega-\lambda)\psi_n$ converges strongly
to zero in $\sii(\Omega)$.
However, passing to the Fermi coordinates based on~$\Sigma$, 
$-\Delta_D^\Omega$ is unitarily equivalent to
a Laplace-Beltrami-type operator with a metric~$G$
depending explicitly on the second fundamental form of~$\Sigma$ and the transport of the cross-section along the manifold.
It is less obvious how to proceed without differentiating
the metric~$G$.
Let us also mention that the proof of the fact 
$\inf\sigma_\mathrm{ess}(-\Delta_D^\Omega)=E_1$
is considerably easier since one can apply variational tools.
Summing up, we understand the following two steps 
as the key ingredients to establish~\eqref{exp3} under the minimal
assumptions of the present paper:
\begin{enumerate}
\item[(i)]
prove~\eqref{exp2} for the Laplace-Beltrami operator $-\Delta_g$
of~$\Sigma$;
\item[(ii)]
apply a Weyl-type criterion modified for quadratic forms
to $-\Delta_D^\Omega$.
\end{enumerate}

\paragraph{\emph{Ad} (i).}
The situation $\dmn=1$ is elementary, 
since any complete non-compact one-dimensional manifold (curve)
is diffeomorphic to the real line with Jacobian one. 
Hence, \eqref{exp2}~holds trivially in this case. 
In the following introductory exposition, 
we therefore assume $\dmn \geq 2$.

In the past two decades, the essential spectra of  Laplacians 
on functions were computed for a large class of manifolds.
When the manifold~$\Sigma$ has a soul  
and the exponential map is a diffeomorphism, 
Escobar~\cite{E} and Escobar-Freire~\cite{ef} proved~\eqref{exp2}, 
provided that  the  sectional curvature is non-negative 
and the manifold satisfies some additional conditions.
Zhou~\cite{Z} proved that those ``additional conditions'' are superfluous.  
When the manifold has a pole, Li~\cite{jli} proved~\eqref{exp2},
if the Ricci curvature of the manifold is non-negative. 
Chen and the second author~\cite{C-L} proved 
the same result  when the radical sectional curvature is non-negative. 
Among the other results in his paper~\cite{donnelly-1}, 
Donnelly proved~\eqref{exp2}
for manifolds with non-negative Ricci curvature 
and Euclidean volume growth.

In 1997, Wang~\cite{wang} proved  that, 
if the Ricci curvature of a manifold~$\Sigma$ satisfies 
${\rm Ric}\,(\Sigma)\geq-\delta/r^2$, 
where~$r$ is the distance to a fixed point, 
and~$\delta$ is a positive number depending only on the dimension,  
then the $L^p$ essential spectrum of~$\Sigma$ 
is $[0,\infty)$ for any  $p\in[1,+\infty]$.
In particular, for a complete non-compact manifold 
with non-negative Ricci curvature, 
all $L^p$ spectra are $[0,\infty)$.  
In 2011, Lu-Zhou~\cite{Lu-Zhou_2011} generalized 
the  result  of Wang and proved that, 
if the Ricci curvature of~$\Sigma$ is non-negative at infinity, 
then all $L^p$ spectra are $[0,\infty)$.  
In both papers \cites{wang, Lu-Zhou_2011}, 
the paper of Sturm~\cite{sturm} is used in the essential way.
In particular, it follows from~\cite{Lu-Zhou_2011} 
that~\eqref{exp2} is satisfied for manifolds with 
Ricci curvature vanishing at infinity,
which cover the class of manifolds we are interested in.

\paragraph{\emph{Ad} (ii).}
In the past two decades,
there were many attempts to establish~\eqref{exp3}
in the context of quantum waveguides when $\dmn=1$.
The results typically required to impose    
some additional assumptions about the decay of curvatures 
of the curve~$\Sigma$ together with their derivatives at infinity,
so that the classical Weyl criterion could be applied.
Location of the essential spectrum~\eqref{exp3} 
under the mere vanishing of curvatures at infinity was
established for the first time by K\v{r}\'i\v{z} 
and the first author in the 2005 paper \cite{KKriz} 
for the $(1,1)$ case (quantum strips) 
and later generalized to arbitrary $\cdmn \geq 1$ in \cite{ChDFK}. 
The breakthrough results of \cites{KKriz,ChDFK} were achieved
thanks to a usage of the Weyl criterion adapted to quadratic forms.
According to this improved criterion, it is enough to look for
the singular sequence~$\psi_n$ in the form domain of $-\Delta_D^\Omega$
and the convergence of $(-\Delta_D^\Omega-\lambda)\psi_n$ to zero
should hold in the (weaker) topology of the dual of the form domain.
It seems that this version of the Weyl criterion is not well known;
we learnt it from a private discussion with Iftimie in 2002
\cite{Iftimie-private}
(see also \cite{DDi}*{Lem.~4.1} for the statement without proof).
For completeness, we present a short proof 
of the criterion in the appendix to this paper
(\cf~Theorem~\ref{Thm.Weyl.bis}).

Due to more complicated  geometrical and topological settings,
the cases of $\dmn \geq 2$ were much less studied during the last years.
The concept of quantum layers, in the $(2,1)$ case, 
was introduced in~\cite{DEK2}.
Under the quite restrictive hypothesis that~$\Sigma$ possesses a pole,  
the minimax principle was used in~\cite{DEK2}
to show that the essential spectrum of $-\Delta_D^\Omega$
is bounded from below by the energy~$E_1$,
provided that the principal curvatures of~$\Sigma$ vanish at infinity.
The hypothesis about the pole was later removed in~\cite{CEK},
where it was additionally shown, still by variational methods, that  
$\inf\sigma_\mathrm{ess}(-\Delta_D^\Omega)=E_1$.
The extension of the lower bound to higher-dimensional situations
$\dmn \geq 3$ and $\cdmn \geq 2$ was performed in \cites{LL2,LL1}.
The complete result~\eqref{exp3} was only established
in the thesis \cite{these}, where, however, additional assumptions
about the decay of the derivatives of the second fundamental form
of~$\Sigma$ at infinity were required.

\paragraph{ }%
In the present paper, we are eventually able to combine 
the Weyl criterion adapted to quadratic forms with 
the very recent result of~\cite{Lu-Zhou_2011} establishing~\eqref{exp2}
in order to conclude with the desired property~\eqref{exp3},
assuming only the mere vanishing of the second fundamental form
of~$\Sigma$ at infinity.
We believe that this is the first time that the weak 
Weyl's criterion is applied on manifolds and differential geometry. 
Moreover, we proceed in a much greater generality 
by considering the \emph{layer}~$\Omega$ 
as a quite arbitrary fiber bundle of~$\Sigma$
and the cross-section~$\omega$ of~$\Omega$ 
is allowed to ``rotate'' along~$\Sigma$.

\subsection{The general setting and main results}
%
Let $\dmn, \cdmn$ be positive integers.
Let~$\omega$ be a bounded open connected set in $\mathbb R^{\cdmn}$
(no regularity assumptions about~$\partial\omega$ are required). 
Let~$\Sigma$ be a complete non-compact Riemannian manifold  
of dimension $\dmn \geq 1$  with a Riemannian metric~$g$. 
We assume that $(\Sigma,g)$ is of class $\mathcal C^2$. 
A \emph{layer} is a fiber bundle 
and a Riemannian manifold $\pi: \Omega\to\Sigma$  with fiber $\omega$. 
Let  the  Riemannian metric of $\Omega$ be $G$.

We are interested in the situation when~$\Omega$ 
is a ``local deformation'' of the \emph{unperturbed layer}~$\Omega_0$,
the latter being defined as $\Sigma\times\omega$ equipped with
the metric~$G_0$ of the block form  
\begin{equation}\label{block}
  G_0 :=
  \begin{pmatrix}
    g & 0 \\
    0 & 1
  \end{pmatrix}
  \,.
\end{equation}
\begin{definition}\label{def1}
We say that  the metric $G$ is a \emph{local deformation} of $G_0$, 
if there exist a sequence $\{y_i\}$ of points on $\Sigma$ 
and a sequence $\{R_i\}$ of real numbers such that
\begin{enumerate}
\item $y_i\to\infty$ (i.e. $d(y_i,o)\to\infty$ for a fixed point $o\in\Sigma$) and $R_i\to\infty$ as $i\to\infty$;
\item $B_{y_i}(R_i)$, 
the geodesic balls centered at $y_i$ with radius $R_i$, 
are disjoint;
\item there is a diffeomorphism  $\pi^{-1}(B_{y_i}(R_i))=B_{y_i}(R_i)\times \omega$, under which 
\begin{equation}\label{local}
  \lim_{i\to\infty} 
  \esssup_{\pi^{-1}(B_{y_i}(R_i))} 
  |G-G_0| = 0
  \,,
\end{equation} where 
$|\cdot|$ denotes  norm under the metric $G_0$.

\end{enumerate}
\end{definition}

In particular, if  $\Omega$ is diffeomorphic to $\Sigma\times\omega$ and 
\begin{equation}\label{local-2}
  \lim_{R\to\infty} 
  \esssup_{(\Sigma\setminus B_R)\times\omega} 
  |G-G_0| = 0
  \,,
\end{equation}
where $B_R$ is the ball of radius $R$ with respect to a fixed point $o\in\Sigma$,
then $G$ is a local deformation of $G_0$. 
Note that in general, we do not require that $\Omega$ 
is diffeomorphic to $\Sigma\times\omega$.\\

Throughout  this paper, we  assume that~$\Sigma$ 
is an immersed surface in $\mathbb R^{\dmn+\cdmn}$.
We say that $\Sigma$ is \emph{asymptotically flat} if
the second fundamental form of $\Sigma$ tends to zero at infinity.
If the ``cross-section''~$\omega$ is not a ball centered
at the origin of~$\mathbb{R}^\cdmn$, we also need to ensure
that the transport of~$\omega$ along~$\Sigma$ 
is asymptotically parallel in a sense; 
this is formalized in Definition~\ref{def4} below
where such~$\Omega$ is called \emph{asymptotically flat immersed layer}.\\
 
Let~$-\Delta_G$ denote the Friedrichs extension on $\sii(\Omega,G)$
of the Laplace-Beltrami operator of $(\Omega,G)$
initially defined on $\mathcal C_0^\infty(\Omega)$.
For immersed layers, $-\Delta_G$~can be identified 
with the Dirichlet Laplacian~$-\Delta_D^\Omega$ discussed above.
There are many papers dealing with the existence and properties
of the discrete eigenvalues of~$-\Delta_G$; 
see, \eg, \cites{DEK2,CEK,EK3,LL1,LL2}.
In this paper, we focus on the essential spectrum.
Let us note that, by ``separation of variables''
(\cf~Section~\ref{Sec.proofs}), 
it is easy to locate the essential spectrum of
the unperturbed layer~$\Omega_0$: 
\begin{equation}\label{EssSpec0}
  \sigma_\mathrm{ess}(-\Delta_{G_0}) = [E_1,\infty)
\end{equation}
provided that~\eqref{exp2} holds
(which is the case if the Ricci curvature of~$\Sigma$ 
vanishes at infinity \cite{Lu-Zhou_2011},
in particular if~$\Sigma$ is asymptotically flat).

Our main results are the following two theorems.
\begin{Theorem}\label{thm3}
Let $\Omega$ be an immersed  layer (See Definition~\ref{def3}). 
Assume that $\Sigma$ is an asymptotically flat immersed submanifold. 
Then 
\[
\sigma_\mathrm{ess}(-\Delta_{G}) \subset [E_1,\infty)
  \,.
\]
In other words, the threshold of the essential spectrum satisfies
the lower bound
\[
\inf\,\sigma_\mathrm{ess}(-\Delta_{G})\geq E_1.
\]
\end{Theorem}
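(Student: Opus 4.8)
The plan is to establish the lower bound $\inf\sigma_\mathrm{ess}(-\Delta_G)\geq E_1$ by a variational/localization argument, in the spirit of the proof that $\inf\sigma_\mathrm{ess}(-\Delta_D^\Omega)=E_1$ in \cites{DEK2,CEK,LL1,LL2}, but adapted to the general fiber-bundle setting. First I would recall the quadratic-form characterization of the essential-spectrum threshold: for a nonnegative self-adjoint operator,
\[
  \inf\sigma_\mathrm{ess}(-\Delta_G)
  = \sup_{K}\;\inf_{\substack{\psi\in Q(-\Delta_G)\\ \supp\psi\cap K=\emptyset\\ \|\psi\|=1}} Q_G[\psi],
\]
where the supremum runs over compact subsets $K\subset\Omega$ and $Q_G$ is the Dirichlet form $\psi\mapsto\int_\Omega|d\psi|_G^2\,\mathrm{dvol}_G$. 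Hence it suffices to show that, for every $\eps>0$, there is a compact $K_\eps$ such that $Q_G[\psi]\geq(E_1-\eps)\|\psi\|^2$ for all $\psi$ in the form domain supported outside $K_\eps$.

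The key geometric input is that $G$ is a local deformation of $G_0$ in the sense of Definition~\ref{def1}, together with asymptotic flatness. Using the Fermi coordinates based on $\Sigma$, on the large disjoint balls $\pi^{-1}(B_{y_i}(R_i))\cong B_{y_i}(R_i)\times\omega$ the metric $G$ is uniformly close to the product metric $G_0$ of \eqref{block} (with $|G-G_0|\to 0$ as $i\to\infty$). On such a region, the Dirichlet form $Q_G$ is, up to an error factor $1+o(1)$, bounded below by the product form $\int (|\nabla_\Sigma\psi|_g^2+|\pa_t\psi|^2)\,\mathrm{dvol}_{G_0}$; dropping the nonnegative tangential gradient term and applying the one-dimensional Dirichlet inequality $\int_\omega|\pa_t\phi|^2\geq E_1\int_\omega|\phi|^2$ fiberwise gives $Q_G[\psi]\geq(1-C\delta_i)E_1\|\psi\|^2$ for $\psi$ supported in $\pi^{-1}(B_{y_i}(R_i))$, where $\delta_i:=\esssup|G-G_0|\to 0$. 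Away from the union of these balls and outside a large compact set, the complement of $\bigcup_i \pi^{-1}(B_{y_i}(R_i))$ consists of regions which — for an asymptotically flat immersed layer — can be handled by the same Fermi-coordinate estimate once $R$ is large (this is precisely where asymptotic flatness and asymptotic parallelism of the transport of $\omega$ enter, via the structure of the metric $G$ built in Definition~\ref{def3}/\ref{def4}). Choosing $K_\eps$ to exhaust the finitely many ``bad'' indices $i$ with $C\delta_i>\eps/E_1$ and the compact core, one gets the claimed bound.

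The main obstacle I anticipate is the bookkeeping of the metric near infinity \emph{outside} the distinguished balls $B_{y_i}(R_i)$: Definition~\ref{def1} only controls $G$ on those balls, not on all of $\pi^{-1}(\Sigma\setminus B_R)$, so one cannot directly cover a neighbourhood of infinity by regions where $G\approx G_0$. The resolution is that for \emph{immersed} layers the metric $G$ is explicitly given by the Fermi-coordinate formula in terms of the second fundamental form and the connection one-form of the normal transport of $\omega$; asymptotic flatness plus asymptotic parallelism force $|G-G_0|\to 0$ \emph{globally} at infinity (the stronger condition \eqref{local-2} holds in the diffeomorphic case, and the general immersed case reduces to it on the Fermi neighbourhood), so in fact the weaker hypothesis of Definition~\ref{def1} is upgraded for the class of $\Omega$ in the theorem. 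A secondary technical point is justifying the fiberwise application of the Dirichlet inequality for general (possibly irregular) $\omega$ and for $\psi$ only in the form domain: this follows from Fubini together with the fact that $C_0^\infty$ is a form core and that $E_1$ is the bottom of the quadratic form of $-\Delta_D^\omega$, so the inequality passes to all of $H_0^1(\omega)$. Once these are in place, the variational estimate above yields $\inf\sigma_\mathrm{ess}(-\Delta_G)\geq(1-\eps)E_1$ for every $\eps>0$, hence $\geq E_1$, and therefore $\sigma_\mathrm{ess}(-\Delta_G)\subset[E_1,\infty)$.
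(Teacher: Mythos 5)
There is a genuine gap: you prove the lower bound under hypotheses that Theorem~\ref{thm3} does not make. Your key geometric input is that $G$ is a local deformation of $G_0$ (Definition~\ref{def1}), reinforced later by ``asymptotic parallelism of the transport of $\omega$'', but Theorem~\ref{thm3} assumes only that $\Omega$ is an immersed layer (Definition~\ref{def3}) and that the second fundamental form of $\Sigma$ vanishes at infinity; no condition whatsoever is imposed on the normal connection. In that generality $|G-G_0|$ need \emph{not} tend to zero at infinity: by \eqref{Gib} the off-diagonal entries $G_{i\beta}=u_\alpha\langle\nabla^\perp_{\partial_i}f_\alpha,f_\beta\rangle_g$ can stay large (e.g.\ a non-circular $\omega$ rotating periodically along a straight line, the example from the introduction where $\inf\sigma_\mathrm{ess}(-\Delta_G)>E_1$), so your step ``$Q_G$ is, up to $1+o(1)$, bounded below by the product form'' fails, and your proposed ``upgrade'' of Definition~\ref{def1} to the global condition \eqref{local-2} is false for the class of layers in the theorem. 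The obstacle you yourself flag is also real even under Definition~\ref{def4}: the hypotheses there control $G$ only on the distinguished disjoint balls $\pi^{-1}(B_{y_i}(R_i))$, whereas the Persson characterization you invoke needs the bound $h_G[\psi]\ge(E_1-\eps)\|\psi\|_G^2$ for \emph{every} $\psi$ supported outside a compact set, not only for $\psi$ supported in those balls; your argument never covers the rest of the neighbourhood of infinity.

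The way the paper closes this is not to compare $G$ with $G_0$ at all. One introduces the block tensor $\tilde G$ obtained from \eqref{Gib} by deleting the connection terms (keeping the second-fundamental-form terms in the tangential block) and uses the purely algebraic facts (cf.\ \cite{LL2}, Lemmata 1 and 2) that pointwise
\[
|\nabla_{\!G}\psi|_G^2\;\ge\;\sum_\alpha\Big|\frac{\partial\psi}{\partial u_\alpha}\Big|^2
\qquad\text{and}\qquad
\det G=\det\tilde G ,
\]
both valid \emph{regardless} of the size of the normal connection. Then asymptotic flatness alone gives $(1-\eps)\det g\le\det\tilde G\le(1+\eps)\det g$ outside a compact set, the fiberwise Dirichlet (Poincar\'e) inequality on $\omega$ yields $h_G[\psi]\ge\frac{1-\eps}{1+\eps}E_1\|\psi\|_G^2$ for all $\psi$ supported outside that compact set, and the Persson-type theorem finishes. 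Your overall variational frame (Persson plus fiberwise ground-state inequality) is the right one, but without the two displayed identities your estimate cannot dispense with the transverse connection, which is exactly what the theorem requires.
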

\begin{Theorem}\label{thm1}
Let $\Sigma$ be an asymptotically flat immersed submanifold.
Assume that $G$ is a local deformation of~$G_0$.  Then
\begin{equation*}
  \sigma_\mathrm{ess}(-\Delta_{G}) 
 \supset [E_1,\infty)
  \,.
  \end{equation*}
\end{Theorem}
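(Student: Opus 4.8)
The plan is to establish the reverse inclusion $[E_1,\infty)\subset\sigma_\mathrm{ess}(-\Delta_G)$ by producing, for each fixed $\lambda\in[E_1,\infty)$, a singular sequence via the quadratic-form version of the Weyl criterion (Theorem~\ref{Thm.Weyl.bis}). Write $\lambda = E_1 + \mu$ with $\mu\geq 0$. Since $\Sigma$ is asymptotically flat, its Ricci curvature vanishes at infinity, so by \cite{Lu-Zhou_2011} we have $\sigma_\mathrm{ess}(-\Delta_g)=[0,\infty)$; hence $\mu\in\sigma_\mathrm{ess}(-\Delta_g)$ and there is a singular sequence $\phi_n$ for $-\Delta_g$ at $\mu$, which we may take (by the form-Weyl criterion on $\Sigma$) normalized in $\sii(\Sigma,g)$, with $\|(-\Delta_g-\mu)\phi_n\|_{W^{-1,2}}\to 0$, and — crucially — we should be able to arrange that $\phi_n$ is supported in a region escaping to infinity inside one of the balls $B_{y_i}(R_i)$ where $|G-G_0|$ is small. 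This last point is the analogue of the standard fact that singular sequences can be spatially localized; I would obtain it by a diagonal/cutoff argument, multiplying $\phi_n$ by smooth cutoffs supported in annuli $B_{y_{i(n)}}(R_{i(n)})$ with $R_{i(n)}\to\infty$ and controlling the commutator $[\Delta_g,\chi_n]$ by the decay of $|\nabla\chi_n|$.

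The trial functions for the layer are then the product ansatz $\psi_n := \phi_n \otimes J_1$, where $J_1$ is the (normalized, real) Dirichlet ground-state eigenfunction of $-\Delta_D^\omega$ on $\omega$ with eigenvalue $E_1$, viewed as a function on $\pi^{-1}(B_{y_{i(n)}}(R_{i(n)}))\cong B_{y_{i(n)}}(R_{i(n)})\times\omega$ and extended by zero. By construction $\psi_n$ lies in the form domain of $-\Delta_{G_0}$ (and of $-\Delta_G$, since the two form domains coincide: the metrics are uniformly comparable on the supports, being close there). The first step of the estimate is the unperturbed computation: by separation of variables, $(-\Delta_{G_0}-\lambda)\psi_n = \big((-\Delta_g-\mu)\phi_n\big)\otimes J_1$, so its dual-norm goes to zero because $\phi_n$ is a form-singular sequence for $-\Delta_g$ at $\mu$ and $J_1$ is a fixed unit vector. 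Likewise $\|\psi_n\|_{\sii(\Omega_0)} = \|\phi_n\|_{\sii(\Sigma)} = 1$. Since $|G-G_0|\to 0$ uniformly on the supports of the $\psi_n$, the associated volume elements and the coefficient matrices of the Dirichlet forms differ by an amount tending to zero there; a direct comparison of the quadratic forms $Q_G$ and $Q_{G_0}$ tested against $\psi_n$ and an arbitrary $\varphi$ in the form domain gives $|Q_G(\psi_n,\varphi) - Q_{G_0}(\psi_n,\varphi)| \leq \epsilon_n \, \|\psi_n\|_{\mathcal{H}^1}\|\varphi\|_{\mathcal{H}^1}$ with $\epsilon_n\to 0$, provided $\|\psi_n\|_{\mathcal{H}^1}$ stays bounded — which it does, because $Q_{G_0}(\psi_n,\psi_n) = \lambda + Q_{G_0}((-\Delta_g-\mu)\phi_n,\cdot)$-type terms are bounded (here one uses that $\phi_n$ has bounded $\mathcal{H}^1(\Sigma)$-norm, again a property one can impose on a form-singular sequence). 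Hence $\|(-\Delta_G-\lambda)\psi_n\|_{(\mathrm{form\ domain})^*}\to 0$ as well, and $\psi_n$ is a singular sequence for $-\Delta_G$ at $\lambda$, so $\lambda\in\sigma_\mathrm{ess}(-\Delta_G)$.

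The main obstacle is the spatial localization of the singular sequence $\phi_n$ for $-\Delta_g$ into the shrinking-relative-to-infinity regions $B_{y_i}(R_i)$ where $G$ is close to $G_0$: a priori the singular sequence furnished by $\sigma_\mathrm{ess}(-\Delta_g)=[0,\infty)$ need not be supported there, and the naive cutoff introduces an error from $[\Delta_g,\chi_n]\phi_n$ whose $W^{-1,2}$-norm is governed by $\|\nabla\chi_n\|_\infty\|\phi_n\|_{\sii} + \|\chi_n\phi_n\|_{\sii}\cdot(\text{lower order})$. One must choose the balls $B_{y_{i(n)}}(R_{i(n)})$ growing slowly enough, and the cutoffs with gradients decaying like $1/R_{i(n)}$, while simultaneously extracting, by weak compactness and the fact that $\phi_n\rightharpoonup 0$, enough $\sii$-mass of $\phi_n$ concentrating at infinity — combined with a diagonal extraction over $i$ — to keep the cutoff function nontrivially normalized. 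This is a standard but delicate "escape to infinity plus IMS-type localization" argument; once it is in place, everything else is the routine form comparison sketched above. One should also verify at the outset that $\psi_n$ is genuinely in the form domain of $-\Delta_G$, which follows from the coincidence of the two form domains (uniform comparability of $G$ and $G_0$ on the relevant supports, together with $G_0$'s block structure separating the $\Sigma$ and $\omega$ directions) and the fact that $J_1\in \mathcal{H}_0^1(\omega)$.
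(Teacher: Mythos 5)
Your overall architecture matches the paper's: product trial functions of the form (base function)\,$\otimes$\,(transverse ground state), the quadratic-form version of the Weyl criterion (Theorem~\ref{Thm.Weyl.bis}), and a comparison of the forms of $-\Delta_G$ and $-\Delta_{G_0}$ controlled by $\esssup|G-G_0|$ on the supports of the trial functions. That last comparison is carried out in the paper essentially as you sketch, via correction terms (the matrices $A$ and $B$) measuring the deviation of $G$ from $G_0$, and it indeed only requires $C^0$-closeness of the metrics because one works with forms.

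However, the step you yourself single out as ``the main obstacle'' is a genuine gap, and the strategy you propose for it would fail. Knowing $\mu\in\sigma_\mathrm{ess}(-\Delta_g)$ (from Lu--Zhou) gives you \emph{some} singular sequence $\phi_n$ for $-\Delta_g$ at $\mu$, but there is no reason why any portion of its $L^2$-mass should concentrate in the particular disjoint balls $B_{y_i}(R_i)$ furnished by Definition~\ref{def1}: the complement of $\bigcup_i B_{y_i}(R_i)$ is itself unbounded, and the standard localization fact only lets you push a singular sequence outside any \emph{fixed compact} set, not into a prescribed sequence of regions escaping to infinity. Multiplying by cutoffs $\chi_n$ supported in those balls can therefore yield $\|\chi_n\phi_n\|\to0$, and no diagonal extraction rescues the normalization. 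The paper avoids this entirely by \emph{constructing} the base functions rather than localizing given ones: Proposition~\ref{Prop.Euclid} provides an explicit normalized singular sequence $\xi_j$ for the Euclidean Laplacian on $\Real^\dmn$ at energy $\lambda-E_1$, with compact supports that can be placed in balls of radius $R_j$; these are transplanted to $B_{y_j}(R_j)\subset\Sigma$ via the exponential map at $y_j$, and part~3 of Lemma~\ref{lem1} guarantees that the pulled-back metric there is $(1\pm\eps)$-comparable to the Euclidean one, so a second $A$/$B$-type form comparison (between $-\Delta_g$ and the flat Laplacian) shows the transplanted $\xi_j$ still satisfy $(-\Delta_g-(\lambda-E_1))\xi_j\to0$ in the dual norm. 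In particular the paper's proof of Theorem~\ref{thm1} does not need the Lu--Zhou identification of $\sigma_\mathrm{ess}(-\Delta_g)$ at all. You need to replace your localization step by such a direct construction; the remainder of your argument then goes through.
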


The conditions stated in the theorems are the relevant ones.
It can be seen on simplest non-trivial situations 
when~$\Sigma$ is a curve.
Indeed, if~$\Sigma$ is a periodically curved curve in~$\Real^2$
(so that~$\Sigma$ is not asymptotically flat), 
then $\inf\sigma_\mathrm{ess}(-\Delta_{G})<E_1$;
see~\cite{KKriz}. 
On the other hand, if a non-circular~$\omega$ is periodically
rotated along a straight line in~$\Real^3$
(so that~$G$ is not a local deformation of~$G_0$), 
then $\inf\sigma_\mathrm{ess}(-\Delta_{G})>E_1$;
see~\cite{moi}.  
These examples demonstrate that both 
the curvatures of~$\Sigma$ and the transverse connection
must necessarily vanish at infinity in order to ensure that 
the essential spectrum coincides with
the (essential) spectrum~\eqref{EssSpec0} 
of the unperturbed layer~$\Omega_0$.
In this paper, we show that these natural conditions 
are actually sufficient for the location 
of the essential spectrum:
\begin{Theorem}\label{thm2}
Let~$\Omega$ be an asymptotically flat immersed layer 
(see Definition~\ref{def4}). 
Then
\begin{equation*}
  \sigma_\mathrm{ess}(-\Delta_{G}) 
  =  [E_1,\infty)
  \,.
\end{equation*}
\end{Theorem}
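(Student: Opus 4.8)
The plan is to deduce Theorem~\ref{thm2} directly from the two preceding theorems, so the work reduces to checking that their hypotheses are met and that the two one-sided inclusions combine into an equality. First I would observe that an asymptotically flat immersed layer (Definition~\ref{def4}) is in particular an immersed layer over an asymptotically flat immersed submanifold~$\Sigma$; hence Theorem~\ref{thm3} applies verbatim and yields the lower bound $\sigma_\mathrm{ess}(-\Delta_G)\subset[E_1,\infty)$. The substance of the argument is therefore to verify that the asymptotic flatness of the \emph{layer}---\ie\ the vanishing at infinity of the second fundamental form of~$\Sigma$ \emph{together with} the asymptotic parallelism of the transport of~$\omega$ along~$\Sigma$---implies that the metric~$G$ is a local deformation of~$G_0$ in the sense of Definition~\ref{def1}. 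Granting this, Theorem~\ref{thm1} gives the reverse inclusion $\sigma_\mathrm{ess}(-\Delta_G)\supset[E_1,\infty)$, and the two inclusions together close the proof.

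For the key verification I would pass to the Fermi (tubular) coordinates based on~$\Sigma$: on a geodesic ball $B_{y_i}(R_i)$ over which the normal bundle can be trivialised, the pulled-back metric~$G$ has an explicit block expression whose entries are built from the metric~$g$ of~$\Sigma$, the second fundamental form (equivalently the Weingarten/shape operators) contracted against the normal coordinate ranging over the bounded set~$\omega$, and the connection one-form of the normal bundle governing how~$\omega$ is transported. Because~$\omega$ is bounded, the normal coordinate stays in a fixed compact set, so the deviation $|G-G_0|$ is controlled---uniformly on $\pi^{-1}(B_{y_i}(R_i))$---by the supremum over that ball of the second fundamental form and of the normal connection form. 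Asymptotic flatness of the layer is exactly the statement that, along a suitable sequence $y_i\to\infty$ with radii $R_i\to\infty$ and the balls $B_{y_i}(R_i)$ disjoint (such a sequence exists because $\Sigma$ is complete and non-compact), both of these quantities tend to zero; hence \eqref{local} holds and $G$ is a local deformation of~$G_0$.

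The main obstacle I anticipate is precisely this bookkeeping with the transverse connection: making sure that ``asymptotically parallel transport of~$\omega$'' as encoded in Definition~\ref{def4} translates cleanly into the decay of the relevant off-diagonal (mixed base/fibre) components of~$G$ in Fermi coordinates, and that the choice of local trivialisation $\pi^{-1}(B_{y_i}(R_i))\cong B_{y_i}(R_i)\times\omega$ can be made compatible with this decay on enlarging balls rather than only pointwise. One has to be a little careful that the trivialisations are not spoiled by monodromy of the normal bundle and that the estimate is uniform across the fibre; once the coordinate expressions from the definitions of immersed and asymptotically flat immersed layers are in hand, however, this is a routine---if slightly tedious---continuity-and-compactness estimate, with no new analytic input needed beyond what Theorems~\ref{thm3} and~\ref{thm1} already provide.
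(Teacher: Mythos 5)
Your proposal is correct and follows exactly the paper's own route: Theorem~\ref{thm2} is deduced by checking, via the Fermi-coordinate expression~\eqref{Gib} of the metric and the sequences $(y_i,R_i)$ supplied by Lemma~\ref{lem1} and Definition~\ref{def4}, that $G$ is a local deformation of $G_0$, and then combining Theorems~\ref{thm3} and~\ref{thm1}. The bookkeeping with the normal connection that you flag as the main obstacle is precisely what is packaged into conditions 2--3 of Definition~\ref{def4} together with Lemma~\ref{lem1}, so no further argument is needed.
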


This last theorem is established as a corollary of 
Theorems~\ref{thm3} and~\ref{thm1} by noticing that
the Riemannian metric~$G$ of an asymptotically flat immersed   
layer is a local deformation of~$G_0$. 

\subsection{The content of the paper}
%
The paper is organized as follows.
In Section~\ref{Sec.pre} we provide precise definitions of various 
geometric and analytic objects we use in the paper;
in particular, we develop the notion of Fermi coordinates
for curved layers. Theorems~\ref{thm3}--\ref{thm2} 
are proved in Section~\ref{Sec.proofs}.
As mentioned above, the main ingredient in the proof
is the Weyl criterion adapted to quadratic forms.
Since we are not aware of the existence of this 
useful tool in the literature, we decided to state it
together with a short proof in Appendix~\ref{App}. 

\section{Preliminaries}\label{Sec.pre}
%
We extensively use the notion of \emph{Fermi coordinates}~\cite{gray},
which are the natural coordinates for tubular geometries.
We employ the convention of indexing local coordinates $x_m$ in~$\Sigma$
and Cartesian coordinates $u_\mu$ in~$\omega$ 
by Latin and Greek indices, respectively,  
the range of them being $m\in\{1,\dots,\dmn\}$ 
and $\mu\in\{\dmn+1,\dots,\cdmn\}$, respectively. 
Then the local coordinates $(x_1,\dots,x_\dmn,u_1,\dots,u_\cdmn)$
are the Fermi coordinate for the unperturbed layer~$\Omega_0$,
\ie~the product manifold $\Sigma\times\omega$ 
equipped with the metric~$G_0$.
Capital Latin indices are used for indexing the local coordinates
in $\Sigma\times\omega$, \ie\ $M,N\in\{1,\dots,\dmn+\cdmn\}$.
Einstein's summation convention is assumed throughout the paper.

In order to introduce a Fermi coordinate system on the layer~$\Omega$,
we need the following auxiliary result. 

\begin{Lemma}\label{lem1} 
Let $\Sigma$ be an immersed submanifold of $\mathbb R^{\dmn+\cdmn}$.
Let $T^\perp\Sigma$ be the normal bundle of $\Sigma$ in $\mathbb R^{\dmn+\cdmn}$. 
Assume that $\Sigma$ is asymptotically flat.
Then there exists a sequence $y_i\in \Sigma$ 
and a sequence $R_i\to\infty$ of positive numbers such that
\begin{enumerate}
\item the injectivity radius of $y_i$ is at least $R_i$;
\item on each $B_{y_i}(R_i)$, there exist local orthonormal  frames  $f_1,\cdots, f_{\cdmn}$ of the normal bundle such that the connection metrics of $\nabla^\perp$ with respect to these frames go to zero.

\item Let $ds^2$ be the pull back of the Riemannian metric of $\Sigma$ onto the tangent space $T_{y_i}\Sigma$ via the exponential map at $y_i$. Then we have
\[
(1-\eps) ds_0^2\leq ds^2\leq (1+\eps) ds_0^2
\]
on the ball of radius $R_i$ in $T_{y_i}\Sigma$, where $ds_0^2$ is the Euclidean metric of $T_{y_i}\Sigma$.

\end{enumerate}
\end{Lemma}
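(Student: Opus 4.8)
The plan is to extract the sequence $\{y_i\}$ and radii $\{R_i\}$ directly from the hypothesis that the second fundamental form of $\Sigma$ vanishes at infinity, and then to obtain each of the three conclusions by a separate geometric argument whose input is the smallness of the second fundamental form on a large geodesic ball around $y_i$. First I would fix an exhaustion: since $\Sigma$ is complete and non-compact, choose any sequence $y_i \to \infty$ with $d(y_i, y_{i+1})$ growing fast enough, and let $\eps_i \to 0$ be an upper bound for the norm of the second fundamental form $|\mathrm{II}|$ on the geodesic ball $B_{y_i}(\rho_i)$ for some $\rho_i \to \infty$; this is possible precisely because $|\mathrm{II}| \to 0$ at infinity. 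The radii $R_i$ will then be chosen as a suitable function of $\eps_i$ and $\rho_i$, shrinking relative to $\rho_i$ but still tending to infinity, so that all the quantitative estimates below go through.

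For item~(1), the key fact is that on a region where $|\mathrm{II}|$ is small, the sectional curvature of $\Sigma$ is small as well (by the Gauss equation, $K$ is controlled quadratically by $\mathrm{II}$), and there are no short geodesic loops; so one bounds the injectivity radius from below via a Klingenberg-type estimate — the injectivity radius at $y_i$ is at least the minimum of $\pi/\sqrt{\sup K}$ and half the length of the shortest geodesic loop through $y_i$, both of which can be made $\geq R_i$ by taking $R_i$ small compared to $\rho_i$ and $1/\sqrt{\eps_i}$, while exploiting that within $B_{y_i}(\rho_i)$ the manifold is nearly Euclidean. Item~(3) is then essentially the Rauch comparison theorem applied on $B_{y_i}(R_i)$: since the sectional curvature is pinched in $[-C\eps_i^2, C\eps_i^2]$ there, the pullback metric $ds^2$ under $\exp_{y_i}$ is $(1\pm\eps)$-close to the flat metric $ds_0^2$ on the ball of radius $R_i$, with $\eps$ governed by $\eps_i R_i$, which we arrange to be small.

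Item~(2) is the one I expect to be the main obstacle. Here one must produce, on each $B_{y_i}(R_i)$, an orthonormal frame $f_1,\dots,f_\cdmn$ of the normal bundle $T^\perp\Sigma$ for which the connection one-forms of the normal connection $\nabla^\perp$ (the ``connection metrics'') are small in sup norm. The natural strategy is to start from an arbitrary local orthonormal normal frame near $y_i$ and then gauge it by an $SO(\cdmn)$-valued function chosen to kill the connection form at $y_i$ and keep it small on the ball — concretely, parallel transport (with respect to $\nabla^\perp$) a fixed orthonormal basis of $T^\perp_{y_i}\Sigma$ radially along geodesics emanating from $y_i$, which gives a frame in ``normal/exponential gauge'' whose connection form vanishes in the radial direction and whose remaining components are controlled by the curvature of $\nabla^\perp$. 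The point is then that the normal curvature $R^\perp$ is itself quadratically controlled by the second fundamental form (via the Ricci equation $R^\perp(X,Y)\xi \sim \mathrm{II}(X,\cdot)\cdot\mathrm{II}(Y,\cdot)$), hence is $O(\eps_i^2)$ on $B_{y_i}(R_i)$; integrating this bound over geodesics of length $\leq R_i$ gives connection coefficients of size $O(\eps_i^2 R_i)$, which tends to zero with the right choice of $R_i$. The technical care needed is to make the radial-gauge construction uniform and to convert the curvature bound into a pointwise bound on the connection form, which requires the injectivity-radius lower bound from item~(1) so that the exponential gauge is well defined on all of $B_{y_i}(R_i)$; assembling these estimates consistently across all three items is where the bookkeeping lies.
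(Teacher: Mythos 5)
Your treatments of items (2) and (3) take a genuinely different route from the paper's: the paper writes $\Sigma$ over $B_{y_i}(R_i)$ as a graph $(x,\sigma_{\dmn+1}(x),\dots,\sigma_{\dmn+\cdmn}(x))$ over the tangent space, runs a continuity argument to show the graph functions have small $\mathcal{C}^1$ and $\mathcal{C}^2$ norms, builds the normal frame by Gram--Schmidt from the explicit almost-normal vectors $n_\alpha$, and reads off both the smallness of the connection coefficients and item (3) directly from those derivative bounds. Your replacement --- the Ricci equation giving $|R^\perp|=O(\eps_i^2)$, a radial (exponential) gauge for $\nabla^\perp$ integrated along geodesics, and Rauch comparison for item (3) --- is viable, with the bookkeeping $\eps_i R_i\to 0$, $\eps_i^2 R_i\to 0$ you indicate; its only cost is that the paper reuses the small-$\mathcal{C}^2$ graph functions $\sigma_\alpha$ in the next lemma (embedding of the endpoint map), which your route does not produce.

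The genuine gap is in item (1). Klingenberg's estimate reduces the injectivity radius bound to (a) a conjugate-radius bound and (b) a lower bound on the length of the shortest geodesic loop at $y_i$. Part (a) is fine: by the Gauss equation $|K|=O(\eps_i^2)$, so the conjugate radius is at least of order $1/\eps_i$ (this is the paper's appeal to Myers' theorem). But part (b) is precisely the crux, and you assert ``there are no short geodesic loops'' without an argument; the justification you gesture at --- that $B_{y_i}(\rho_i)$ is ``nearly Euclidean'' in the sense of small intrinsic curvature --- cannot deliver it, because small (even zero) sectional curvature is compatible with arbitrarily small injectivity radius: flat tori and thin flat cylinders have short geodesic loops through every point. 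So no purely intrinsic consequence of $|\mathrm{II}|\to 0$ rules out short loops; one must use the extrinsic immersion. That is what the paper does: a geodesic loop $\sigma$ at $y_i$ is a curve in $\Real^{\dmn+\cdmn}$ whose Euclidean curvature is bounded by $|\mathrm{II}|\leq\eps_i$, and since the exterior angle at the corner $y_i$ is at most $\pi$, the Fenchel--Milnor total-curvature inequality gives $\pi+\int_\sigma|\kappa|\geq 2\pi$, hence $\mathrm{length}(\sigma)\geq\pi/\eps_i\to\infty$. Without some such extrinsic step, item (1) is unsupported, and the exponential gauge underlying your items (2) and (3) (which needs $\exp_{y_i}$ to be a diffeomorphism on the ball of radius $R_i$) collapses with it.
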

\begin{Remark}
Note that property~2  above implies that $\pi^{-1}(B_{y_i}(R_i))$ 
is diffeomorphic to $B_{y_i}(R_i)\times \omega$.
\end{Remark}
\begin{proof}
We pick any sequence $y_i\in\Sigma$, $y_i\to\infty$ as $i\to\infty$. 
We shall prove that the injectivity radius at $y_i$ goes to infinity. 
Since the curvature of~$\Sigma$ goes to zero at infinity, 
there exist $R_i\to\infty$ such that on the ball $B_{y_i}(R_i)$, there are no conjugate points of $y_i$ by Meyer's theorem~\cite{cheeger}*{page 27}. Therefore, in order to get the injectivity radius estimate, we need to prove that there are no short closed geodesic loops. 

Assume that $\sigma$ is the closed geodesic loop.
Then it can be regarded as a smooth curve in $\mathbb R^{\dmn+\cdmn}$ (except at  the point $y_i$). 
Let~$\kappa$ be the curvature of $\sigma$. 
Since the angle of the curve at $y_i$ is at most $\pi$,  we have (\cf~\cites{ku,milnor})
\[
\pi+\int_\sigma|\kappa|\geq 2\pi.
\]
On the other hand, since $\sigma$ is a geodesic line,  $\kappa$ is bounded by norm of the the second fundamental form. Therefore $\kappa\to 0$. By the above inequality, the length of $\sigma$ becomes very large as $y_i\to\infty$. Therefore there exist $R_i\to\infty$ such that the injectivity radius at $y_i$ is at least $R_i$.

Fixing $i$, without loss of generality, we assume $y_i$ is the origin of $\mathbb R^{\dmn+\cdmn}$ and the tangent space of $\Sigma$ at $y_i$ is the subspace
$(\mathbb R^\dmn,0)\subset \mathbb R ^{\dmn+\cdmn}$. Since the injectivity radius at $y_i$ is at least $R_i$. Let $\iota:\Sigma\to\mathbb R^{\dmn+\cdmn}$ be the inclusion and let $\exp_{y_i}$ be the exponential map at $y_i$. Let $\sigma=\iota\circ\exp_{y_i}$. Then we can write
$\sigma: \mathbb R^{\dmn}\to\Sigma\subset \mathbb R^{\dmn+\cdmn}$ by
\begin{equation}
\sigma(x_1,\cdots, x_{\dmn})=(x_1,\cdots,x_\dmn, \sigma_{\dmn+1},\cdots,\sigma_{\dmn+\cdmn}),
\quad \sum_j|x_j|^2\leq R_i^2,
\end{equation}
where $\sigma_{\dmn+1},\cdots,\sigma_{\dmn+\cdmn}$ are functions of $x_1,\cdots, x_\dmn$. Assume that the second fundamental form on $B_{y_i}(R_i)$ is less than $\eps$. Then we have
\begin{equation}\label{20}
g^{ik} g^{jl}\,h_{\alpha\beta}\frac{\pa^2\sigma_\alpha}{\pa x_i\pa x_j}\cdot\frac{\pa^2\sigma_\beta}{\pa x_k\pa x_l}(1+|\nabla\sigma_\alpha|^2)^{-1/2}(1+|\nabla\sigma_\beta|^2)^{-1/2}\leq \eps^2,
\end{equation}
where 
\[
g_{ij}=\delta_{ij}+\sum_\alpha\frac{\pa\sigma_\alpha}{\pa x_i}\cdot\frac{\pa\sigma_\alpha}{\pa x_j},
\]
and 
\[
h_{\alpha\beta}=\delta_{\alpha\beta}+\sum_i\frac{\pa\sigma_\alpha}{\pa x_i}\cdot\frac{\pa\sigma_\beta}{\pa x_i}
\]
are the metric matrices of $\Sigma$ and $T^\perp\Sigma$, respectively.

Let $x_1,\cdots,x_{\dmn}$ be the normal coordinate system at $y_i$. Then $\nabla\sigma_\alpha(y_i)=0$.
Let $\delta\leq R_i$ be the largest number such that 
\[
\sum_{j,\alpha}\left|\frac{\pa\sigma_\alpha}{\pa x_j}\right|^2\leq \frac 12
\]
on $B_{y_i}(\delta)$.
If $\delta<R_i$, then by the definition of $\delta$,  there is a point $y'\in\pa B_{y_i}(\delta)$ such that 
\[
\sum_{j,\alpha}\left|\frac{\pa\sigma_\alpha}{\pa x_j}\right|^2(y')=\frac 12.
\]
By~\eqref{20}, we have
\begin{equation}\label{21}
\sum_{j,k,\alpha}\left|\frac{\pa^2 \sigma_\alpha}{\pa x_j\pa x_k}\right|^2\leq (3/2)^{4n}\eps^2
\end{equation}
on $B_{y_i}(\delta)$. Since $\nabla \sigma_\alpha(y_i)=0$, by the mean value theorem, we have
\[
\frac 12 =\sum_{j,\alpha}\left|\frac{\pa\sigma_\alpha}{\pa x_j}\right|^2(y')\leq (3/2)^{4n}\,\dmn\cdot\cdmn\,\eps^2\delta^2.
\]
Thus $\delta\geq C\eps^{-1}$ for some constant $C$ depending only on the dimensions. If $\eps$ is sufficiently small, then we have $\delta=R_i$. 

By~\eqref{21}, we know that on $B_{y_i}(\sqrt{R_i})$, both the first and the second derivatives of $\sigma_\alpha$ goes to zero as $i\to\infty$.

Let
\[
n_\alpha:=\left(-\frac{\pa \sigma_\alpha}{\pa x_1},
\cdots, -\frac{\pa \sigma_\alpha}{\pa x_n}, 0,\cdots, 1,\cdots, 0\right)
\]
where $1$ is in the $(\dmn+\alpha)$-th place. Then $n_{\dmn+1},\cdots, n_{\dmn+\cdmn}$ are ``almost'' orthonormal on $B_{y_i}(R_i)$  given that the first derivatives are small. Using the Gram-Schmidt process, we obtain an orthonormal frame system $f_1,\cdots, f_{\cdmn}$. It is not hard to see that the connection $\nabla^\perp$ on $f_\alpha$ are bounded by the first and the second derivatives of $\sigma_\alpha$, hence go to zero as $i\to\infty$. This proves part 2 of the lemma, if we replace $R_i$ by $\sqrt{R_i}$, the latter  tending to infinity as $i\to\infty$. Part 3 of the lemma follows from the fact that $ds_0^2$ and $ds^2$ differ by the $\mathcal C^1$-norm 
of the vector-valued functions $\sigma_\alpha$.
\end{proof}
With the above lemma, we are able to generalize the Fermi coordinate systems to fiber bundles over $\Sigma$.
\begin{definition}
Consider the isometric immersion 
$\varSigma \to \mathbb{R}^{\dmn+\cdmn}$ of the 
base manifold.
For a local coordinate chart $(U, \phi)$ on $\varSigma$, 
we trivialize 
the normal bundle $T^{\bot}\varSigma$ over $U$ with an orthonormal 
frame 
$\{f_1, \cdots, f_\cdmn\}$.  Then for each $x \in U$,  
$(x,\xi) \in T_{x}^{\bot}\varSigma$ we define local coordinates 
$(x_1, \cdots, x_\dmn, u_1, \cdots, u_\cdmn)$, where
$\xi = \sum u_{\alpha}f_{\alpha}(x)$.  We call such a coordinate system
a \emph{Fermi coordinate system} on $T^{\bot}\varSigma$.   
\end{definition}

As a set, $T^\perp\Sigma$ is $\mathbb R^{\dmn+\cdmn}$. 
Moreover, there is an \emph{endpoint map} (\cf~\cite{milnor-2}*{page 32})
\[
p: T^\perp\Sigma\to\mathbb R^{\dmn+\cdmn}
\]
defined as follows: 
for any element $(x,v)$, where $x\in\Sigma$ 
and $v$ is a normal vector of $\Sigma$ at $x$, we have 
\[
p(x,v)=x+v.
\]

Under a Fermi coordinate system the entries in the metric tensor~$G$ 
can be expressed as
\begin{equation}\label{metric-coord}
G_{ij}dx_{i}dx_{j} 
+ G_{i\alpha}dx_{i}du_{\alpha} 
+ G_{\alpha i}du_{\alpha}dx_{i} 
+ G_{\alpha\beta}du_{\alpha}du_{\beta}. 
\end{equation}
A straightforward calculation gives
\begin{align}\label{Gib}
\begin{split}
G_{ij} &= 
g_{ij} - 2u_{\alpha}\langle S_{f_{\alpha}}(\partial_i), 
\partial_j\rangle_g +  
u_{\alpha}u_{\beta}\langle 
S_{f_{\alpha}}(\partial_i), S_{f_{\beta}}(\partial_j)\rangle_g
+  u_{\alpha}u_{\beta}\langle 
\nabla_{\partial i}^{\bot}f_{\alpha}, 
\nabla_{\partial j}^{\bot}f_{\beta}\rangle_g,
\\
G_{i\beta} &= 
u_{\alpha}\langle \nabla_{\partial i}^{\bot}f_{\alpha}, 
f_{\beta}\rangle_g,
\\
G_{\alpha\beta} &= \delta_{\alpha\beta},
\end{split}
\end{align}
where $\partial_i := \frac{\partial}{\partial x_i}$; $\langle\,\,,\,\,\rangle_g$ is the inner product induced by the Riemannian metric;
$g_{ij} dx_idx_j$ is the Riemannian metric on
$\Sigma$; and $S$ is the second fundamental form of~$\Sigma$. 

An \emph{$a$-tube} $V=V(a)$ of $\Sigma$ is the set of all points 
in $\mathbb R^{\dmn+\cdmn}$ whose distance to $\Sigma$ is less than $a$. 
If $\Sigma$ has bounded second fundamental form, 
then there exists an $a>0$ such that $V\subset \mathbb R^{\dmn+\cdmn}$ 
is an immersion, which follows from~\eqref{Gib}. We have

\begin{Lemma}
Assume that $\Sigma$ is an embedded asymptotically flat submanifold of $\mathbb R^{\dmn+\cdmn}$.
Then for any $R>0$, 
there exists $y\in\Sigma$ such that on $\pi^{-1}(B_y(R))$, 
$p$  is an embedding for $a$ small.  
\end{Lemma}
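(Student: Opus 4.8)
The plan is to exploit the decay of the second fundamental form to produce, at large distances, a region in which the tube map is not only an immersion but genuinely injective, hence an embedding after adjoining the transverse ``cross-section'' $\omega$. First I would recall from the discussion following~\eqref{Gib} that, since $\Sigma$ has bounded second fundamental form, there is an $a_0>0$ for which the endpoint map $p$ restricted to the $a_0$-tube $V(a_0)$ is an immersion; this is an \emph{a priori} local statement. To upgrade it to an embedding over $\pi^{-1}(B_y(R))$ for a suitably chosen $y$, I would invoke Lemma~\ref{lem1}: for the given $R$, pick $i$ large enough that $R_i>R$, so that on $B_{y_i}(R_i)$ the injectivity radius is at least $R_i$, the normal-bundle connection coefficients in the frame $f_1,\dots,f_\cdmn$ are as small as we like, and the intrinsic metric $ds^2$ is pinched between $(1\pm\eps)ds_0^2$. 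Then take $y=y_i$.

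The core of the argument is then to show $p$ is injective on $\pi^{-1}(B_y(R))$. Here I would use the normal-coordinate description from the proof of Lemma~\ref{lem1}: after identifying $y_i$ with the origin and $T_{y_i}\Sigma$ with $(\mathbb R^\dmn,0)$, the immersion is $\sigma(x)=(x_1,\dots,x_\dmn,\sigma_{\dmn+1}(x),\dots,\sigma_{\dmn+\cdmn}(x))$ with both $|\nabla\sigma_\alpha|$ and $|\nabla^2\sigma_\alpha|$ uniformly small on $B_{y_i}(\sqrt{R_i})$ (equation~\eqref{21} and the sentence after it). Consequently the first $\dmn$ Cartesian coordinates of $\sigma(x)$ are a small $C^1$-perturbation of the identity on the base, so $\sigma$ itself is injective on the base ball (a standard Lipschitz-perturbation-of-identity argument), and the normal directions $f_\alpha(x)$ — obtained by Gram--Schmidt from the $n_\alpha$ — are a small $C^0$-perturbation of the constant frame $e_{\dmn+\alpha}$. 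For two points $(x,\xi)$ and $(x',\xi')$ in $\pi^{-1}(B_y(R))$ with $\xi=\sum u_\alpha f_\alpha(x)$, $|u|$ bounded by $\operatorname{diam}\omega$, suppose $p(x,\xi)=p(x',\xi')$, i.e.\ $\sigma(x)+\xi=\sigma(x')+\xi'$. Projecting onto the first $\dmn$ coordinates gives $|x-x'|\le C\eps(|x-x'|+|u|+|u'|)$ from the smallness of $\nabla\sigma$ and of the off-diagonal components of $f_\alpha$; projecting onto the transverse directions gives $|u-u'|$ controlled similarly; together, for $\eps$ small (i.e.\ $a$ small and $i$ large, with $a$ governing the size of $\omega$ we dilate into the normal fibre), this forces $x=x'$ and $u=u'$. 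Thus $p$ is injective, and since it is already an immersion, it is an embedding on $\pi^{-1}(B_y(R))$.

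The step I expect to be the main obstacle is bookkeeping the two independent small parameters and their roles: $\eps$ from Lemma~\ref{lem1} (controlling the geometry of $\Sigma$ at $y_i$) and $a$ (controlling how far into the normal bundle we go, i.e.\ the effective size of $\omega$). One must check that the injectivity estimate closes — that the constants $C$ multiplying $\eps$ and $a$ depend only on $\dmn,\cdmn$ and not on $R$ or $i$ — and that choosing first $a$ small and then $i$ large (so $R_i>R$ and the frame/metric errors beat $1/a$) is consistent. A secondary technical point is that Lemma~\ref{lem1} as proved gives control on $B_{y_i}(\sqrt{R_i})$ rather than $B_{y_i}(R_i)$; since $\sqrt{R_i}\to\infty$ this is harmless, but one should simply relabel and choose $i$ with $\sqrt{R_i}>R$. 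I would also note explicitly that embeddedness of $\Sigma$ (as opposed to mere immersion) is used to rule out distinct sheets of $\Sigma$ coming close in $\mathbb R^{\dmn+\cdmn}$ over the \emph{same} ball $B_y(R)$; once that is granted, everything reduces to the local normal-coordinate computation above.
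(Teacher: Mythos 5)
Your route is genuinely different from the paper's. The paper argues by a dichotomy: from Lemma~\ref{lem1} it records that over $B_y(R)$ the submanifold is close to its tangent plane, so intrinsic and extrinsic distances are comparable, $\tfrac12\,\overline{x_1x_2}\le d(x_1,x_2)\le 2\,\overline{x_1x_2}$; for base points with $d(x_1,x_2)<\delta$ it appeals to the (uniform) local injectivity of $p$ coming from the immersion property via~\eqref{Gib}, and for $d(x_1,x_2)>\delta$ it separates tube points by the triangle inequality $\overline{p(z_1)p(z_2)}\ge\overline{p(x_1)p(x_2)}-2a$, choosing $a$ small relative to $\delta$. Your single perturbation-of-identity estimate in the graph/Fermi coordinates treats both regimes at once, and in fact substantiates the uniform-$\delta$ step that the paper leaves implicit; it buys a cleaner quantitative conclusion (a bi-Lipschitz bound for $p$ over the ball, hence injectivity, immersion, and homeomorphism onto the image in one stroke), at the cost of the bookkeeping you yourself flag.

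That bookkeeping is exactly where your write-up does not yet close, and it is the one genuine repair needed. Your projected inequality bounds $|x-x'|$ (and, ``similarly'', $|u-u'|$) by $C\eps\bigl(|x-x'|+|u|+|u'|\bigr)$, i.e.\ by the \emph{sizes} of the transverse variables; estimates of that shape only show that the two preimages are within $O(\eps a)$ of each other and do not force $x=x'$, $u=u'$. To conclude you must keep only \emph{differences} on the right-hand side: expand $\sum_\alpha u_\alpha f_\alpha(x)-\sum_\alpha u'_\alpha f_\alpha(x')=\sum_\alpha(u_\alpha-u'_\alpha)f_\alpha(x)+\sum_\alpha u'_\alpha\bigl(f_\alpha(x)-f_\alpha(x')\bigr)$, so the tangential and normal projections give $|x-x'|\le C\eps|u-u'|+C\eps a\,|x-x'|$ and $|u-u'|\le C\eps(1+a)|x-x'|$, which do combine, for $\eps$ small and $a$ bounded, to $x=x'$ and then $u=u'$. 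Equivalently, write $p(x,u)=(x,u)+E(x,u)$ with $E(x,u)=\bigl(\sum_\alpha u_\alpha f_\alpha(x)^{\mathrm{tan}},\ \sigma^{\mathrm{nor}}(x)+\sum_\alpha u_\alpha f_\alpha(x)^{\mathrm{nor}}-u\bigr)$; the smallness of the first and second derivatives of $\sigma_\alpha$ (hence of $\nabla^\perp f_\alpha$) from Lemma~\ref{lem1} gives $\mathrm{Lip}(E)\le C\eps(1+a)$ on the convex set $B_R\times\{|u|<a\}$, whence $|p(z)-p(z')|\ge\bigl(1-C\eps(1+a)\bigr)|z-z'|$, which is the statement you need. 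Your remaining remarks are fine: replacing $R_i$ by $\sqrt{R_i}$ is harmless, and embeddedness of $\Sigma$ plays no essential role here, since the claim concerns only the fibers over the intrinsic ball $B_y(R)$, whose image is a graph (hence embedded) by Lemma~\ref{lem1}.
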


\begin{proof}  Let $\overline{x_1x_2}$ be the Euclidean distance and $d(x_1,x_2)$ be the geodesic distance on $\Sigma$ for any two points $x_1,x_2\in B_y(R)$. Since the functions $\sigma_\alpha$ in Lemma~\ref{lem1} are of small $\mathcal C^2$ norm, $\Sigma$ is very close to its tangent space on $B_{y}(R)$ for some $y$ far away from a fixed point $o$.  Moreover, we have
\[
1/2 \, \overline{x_1x_2}\leq d(x_1,x_2)\leq 2 \, \overline{x_1x_2}
\]
on $B_y(R)$.
By~\eqref{Gib}, $p$ is an immersion, and there exists a $\delta$ such that if $d(x_1,x_2)<\delta$, then $p(x_1)\neq p(x_2)$. On the other hand, if we choose $a<1/3 C^{-1} \delta$, then
for any $z_1, z_2$ such that $\overline{z_i x_i}<a$ for $i=1,2$ and the line segments of $z_ix_i$ $(i=1,2)$ are orthogonal to $\Sigma$ and $d(x_1,x_2)>\delta$, we have 
\[
\overline{p(z_1)p(z_2)} \geq \overline{p(x_1)p(x_2)}-2a>\frac 13 C^{-1}\delta>0
\]
and hence $p(z_1)\neq p(z_2)$.

\end{proof}

\begin{definition} \label{def3}
We say that~$\Omega$ is an \emph{immersed layer}  
if the restriction  $p: T^\perp\Sigma\to\mathbb R^{\dmn+\cdmn}$ 
is an immersion,  $\Omega\subset V(a)$, and $G$ is the restriction of the Euclidean metric of $\mathbb R^{\dmn+\cdmn}$ to $\Omega$.
\end{definition}
\begin{definition}\label{def4}
An immersed   layer is called \emph{asymptotically flat} if
\begin{enumerate}
\item the second fundamental form of $\Sigma$ goes to zero at infinity;
\item the Fermi coordinates provides the diffeomorphism
\[
\pi^{-1}(B_{y_i}(R_i))=B_{y_i}(R_i)\times\omega,
\]
where $(y_i, R_i)$ satisfy the conclusions of Lemma~\ref{lem1};
\item Let $f_1,\cdots, f_\cdmn$ be the frames of 
$T^\perp\Sigma$ on $B_{y_i}(R_i)$ which define the Fermi coordinates. 
Then the connection $\nabla^\perp$ with respect to  the frames go to zero
on $B_{y_i}(R_i)$;
\item $G$ is the restriction of the Euclidean metric of $\mathbb R^{\dmn+\cdmn}$ to $\Omega$.
\end{enumerate}
\end{definition}
%

\section{Proofs}\label{Sec.proofs}
%

Let us recall the definition of~$-\Delta_G$ 
as the Friedrichs extension on $\sii(\Omega,G)$
of the Laplace-Beltrami operator of $(\Omega,G)$
initially defined on $C_0^\infty(\Omega)$.
The operators~$-\Delta_{G_0}$ on $\sii(\Sigma\times\omega,G_0)$
and~$-\Delta_g$ on $\sii(\Sigma,g)$ are introduced analogously. 
By definition, $-\Delta_g$, $-\Delta_G$ and $-\Delta_{G_0}$
are self-adjoint operators on the respective Hilbert spaces.
The corresponding quadratic forms are denoted by
$h_g$, $h_G$ and $h_{G_0}$, respectively;
by definition, the domains of the forms coincide with
the Sobolev spaces, $H_0^1(\Sigma,g)$, $H_0^1(\Omega,G)$
and $H_0^1(\Sigma\times\omega,G_0)$, respectively.
The norm and inner product in $\sii(\Sigma,g)$
are denoted by $\|\cdot\|_g$ and $(\cdot,\cdot)_g$, respectively;
the same subscript convention is used for the other Hilbert spaces as well.
Finally, we use the notations
$$
  |\nabla_{\!g} \psi|_g 
  := \sqrt{\overline{\partial_i\psi} \, g^{ij} \,\partial_j\psi}  
  \qquad \mbox{and} \qquad
  \|\nabla_{\!g} \psi\|_g 
  := \| |\nabla_{\!g} \psi|_g \|_g
  \,, 
$$
and similarly for $(\Omega,G)$ and $(\Sigma\times\omega,G_0)$.

\begin{proof}[Proof of~\eqref{EssSpec0}.]
Let us begin with a more detailed proof of~\eqref{EssSpec0}. 
The block form~\eqref{block} leads to the decoupling
\begin{equation*}
  -\Delta_{G_0} = -\Delta_g \otimes 1 + 1 \otimes (-\Delta_D^\omega)
  \qquad \mbox{on} \qquad
  \sii(\Sigma,g) \otimes \sii(\omega) 
  \,.
\end{equation*}
Employing \cite{RS1}*{Thm.~VIII.33}
and the discreteness of the spectrum of~$-\Delta_D^\omega$,
we arrive at the spectrum decomposition (\cf~\eqref{exp1})
\begin{equation*}
  \sigma(-\Delta_{G_0}) = 
  \overline{
  \sigma(-\Delta_g) + \sigma(-\Delta_D^\omega)
  }
  = \overline{
  \bigcup_{k=1}^\infty 
  \big(
  E_k+\sigma(-\Delta_g)
  \big)
  }
  \,.
\end{equation*}
Finally, Lu and Zhou proved in~\cite{Lu-Zhou_2011}
that if the Ricci curvature of~$\Sigma$
vanishes at infinity, then~\eqref{exp2} holds. 
Note that~$-\Delta_g$ is a non-negative operator,
so that~\eqref{exp2} coincides with the total spectrum of~$-\Delta_g$.  
Consequently, if~$\Sigma$ is asymptotically flat, 
\begin{equation*}
  \sigma(-\Delta_{G_0}) = [E_1,\infty)
  \,.
\end{equation*}
This establishes~\eqref{EssSpec0} because intervals 
have no isolated points.
\hfill

\end{proof}

\begin{proof}[Proof of Theorem~\ref{thm3}.]
Let us now turn to the proof of Theorem~\ref{thm3}, which 
is similar to that of~\cite{LL2}*{Thm.~1}.

By Lemma~\ref{lem1}, let $(x_1,\cdots,x_{\dmn}, u_1,\cdots, u_{\dmn+\cdmn})$ be the 
Fermi coordinates of $\pi^{-1}(B_y(R))$ where $y\in\Sigma$ and $R$ is sufficiently large.
Let
\[
\begin{aligned}
\tilde G_{ij}
&:=g_{ij} - 2u_{\alpha}\langle S_{f_{\alpha}}(\partial_i), 
\partial_j\rangle_g+u_{\alpha}u_{\beta}\langle 
S_{f_{\alpha}}(\partial_i), S_{f_{\beta}}(\partial_j)\rangle_g, 
\\
\tilde{G}_{i\beta} &:= 0
\\
\tilde{G}_{\alpha\beta} &:= \delta_{\alpha\beta}.
\end{aligned}
\]
Note that $\tilde G$ is a $T^*(\Sigma)^{\otimes 2}$ symmetric tensor on $\Omega$.
Then by~\cite{LL2}*{Lemmata 1,2}, we have
\[
|\nabla_{\!G} \psi|_G^2\geq\sum_\alpha \left|\frac{\partial\psi}{\partial u_\alpha}\right|^2,
\]
for every smooth function~$\psi$ on~$\Omega$
and $\det G=\det\tilde G$.

Since $\Sigma$ is asymptotically flat, for any $\eps>0$, 
there exists a compact set $K$ outside which 
\[
(1+\eps)\det g\geq \det\tilde G\geq (1-\eps)\det g.
\]
Using the above inequality and by the Poincar\'e inequality on $\omega$, we  have
\[
\int_\Omega|\nabla_{\!G} \psi|_G^2 \, \det \tilde G
\geq \frac{1-\eps}{1+\eps}\,E_1\int_\Omega |\psi|^2 \det \tilde G.
\]
Thus if the support of $\psi$ is outside $K$, we have
\[
h_G[\psi] \equiv
\int_\Omega|\nabla_{\!G} \psi|_G^2 \, \det G
\geq \frac{1-\eps}{1+\eps} \,E_1\int_\Omega |\psi|^2\det G
\equiv \frac{1-\eps}{1+\eps} \,E_1 \, \|\psi\|_G^2
\,. 
\]
By a Persson-type result \cite{CFKS-2nd}*{Thm.~3.12}
(alternatively one can use a Neumann bracketing argument
in the spirit of~\cite{DEK2}*{Thm.~4.1}), it follows that 
$$
  \inf\sigma_\mathrm{ess}(-\Delta_G) 
  \geq \frac{1-\eps}{1+\eps} \, E_1
  \,.
$$
Theorem~\ref{thm3} follows by sending~$\eps$ to zero.\end{proof}

Before establishing Theorem~\ref{thm1},
we state the following well-known result without proof.
\begin{prop}\label{Prop.Euclid} 
For any $\lambda\geq 0$, 
there exists a sequence of nonzero  smooth functions $\xi_j$ in $\mathbb R^\dmn$ 
with disjoint compact support such that
\[
\|\xi_j\|_{L^2(\Real^\dmn)}=1,\quad 
\|\Delta \xi_j+\lambda \xi_j\|_{L^2(\Real^\dmn)}\to 0
\]
as $j\to\infty$, 
where $\Delta$ is the Laplacian of  the Euclidean space $\mathbb R^\dmn$.
\end{prop}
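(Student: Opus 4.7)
The plan is to construct the singular sequence from cut-off plane waves, which are the natural generalized eigenfunctions of $-\Delta$ on $\mathbb{R}^\dmn$ associated with the spectral value $\lambda$.

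First I would fix a vector $k\in\mathbb{R}^\dmn$ with $|k|^2=\lambda$ (for $\lambda=0$ take $k=0$), so that $-\Delta e^{ik\cdot x}=\lambda\, e^{ik\cdot x}$ in the classical sense; however, $e^{ik\cdot x}$ is not in $L^2(\mathbb{R}^\dmn)$. To obtain $L^2$ functions I would fix a nonzero cut-off $\phi\in C_0^\infty(\mathbb{R}^\dmn)$ with $\supp\phi\subset B(0,1)$ and put
\[
  \xi_j(x) := c_j \, \phi\!\left(\tfrac{x-y_j}{R_j}\right) e^{ik\cdot x},
\]
where $R_j\to\infty$, $c_j$ is a normalization constant making $\|\xi_j\|_{L^2(\mathbb{R}^\dmn)}=1$, and the centers $y_j$ are chosen inductively so that $|y_j-y_{j'}|>R_j+R_{j'}$ for $j\neq j'$; this automatically forces the compact supports $B(y_j,R_j)$ to be disjoint, which is just a question of translating far enough at each step.

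Next I would carry out the standard Leibniz computation, writing $\phi_j(x):=\phi((x-y_j)/R_j)$. Since $-\Delta(\phi_j e^{ik\cdot x})=\lambda\,\phi_j e^{ik\cdot x}-(\Delta\phi_j)e^{ik\cdot x}-2ik\cdot(\nabla\phi_j)\,e^{ik\cdot x}$, one gets
\[
  (\Delta+\lambda)\xi_j = c_j\,e^{ik\cdot x}\bigl[\Delta\phi_j+2ik\cdot\nabla\phi_j\bigr].
\]
By scaling, $\|\phi_j\|_{L^2}=R_j^{\dmn/2}\|\phi\|_{L^2}$ (so $c_j=R_j^{-\dmn/2}\|\phi\|_{L^2}^{-1}$), while $\|\nabla\phi_j\|_{L^2}=R_j^{\dmn/2-1}\|\nabla\phi\|_{L^2}$ and $\|\Delta\phi_j\|_{L^2}=R_j^{\dmn/2-2}\|\Delta\phi\|_{L^2}$. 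Hence
\[
  \|(\Delta+\lambda)\xi_j\|_{L^2(\mathbb{R}^\dmn)} \leq \frac{\|\Delta\phi\|_{L^2}}{\|\phi\|_{L^2}}\,R_j^{-2}+\frac{2|k|\,\|\nabla\phi\|_{L^2}}{\|\phi\|_{L^2}}\,R_j^{-1}\xrightarrow[j\to\infty]{}0,
\]
which is the desired convergence.

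There is really no conceptual obstacle here; the only minor point of care is to perform the two constructions (disjointness of supports and smallness of the error) in the correct order, namely choose the sequence $R_j\nearrow\infty$ first to control the error, then pick the translation centers $y_j$ far apart to ensure disjoint supports. The functions $\xi_j$ are complex-valued, which is compatible with the use of the proposition in the paper (the Weyl-type criterion is applied in the complex Hilbert space $L^2$); if real-valued functions were desired, one could equivalently take the real or imaginary part of $\xi_j$, whose $L^2$ norms still grow like $R_j^{\dmn/2}$ (for $k\neq 0$ this follows from the oscillation of $\cos$ or $\sin$), and the same estimate carries through.
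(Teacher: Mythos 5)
Your proof is correct. The paper actually states Proposition~\ref{Prop.Euclid} \emph{without proof}, deferring to it as ``well-known'', so there is no proof in the paper to compare against; what you have written is precisely the standard argument that the authors are alluding to. The cut-off plane wave construction $\xi_j = c_j\,\phi\bigl((x-y_j)/R_j\bigr)e^{ik\cdot x}$ with $|k|^2=\lambda$, the Leibniz expansion isolating $\Delta\phi_j + 2ik\cdot\nabla\phi_j$, the scaling estimates $\|\nabla\phi_j\|_{L^2}\sim R_j^{\dmn/2-1}$ and $\|\Delta\phi_j\|_{L^2}\sim R_j^{\dmn/2-2}$ against the normalization $c_j\sim R_j^{-\dmn/2}$, and the inductive choice of far-separated centers $y_j$ to guarantee disjoint supports are all correct and complete. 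Your closing remark about real parts is unnecessary (the Hilbert space in the paper is complex and the statement does not require real-valued functions), but it does no harm.
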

\noindent
This proposition basically tells us that the singular sequence 
of Theorem~\ref{Thm.Weyl} can be chosen in a particular way
for the Euclidean Laplacian.

\begin{proof}[Proof of Theorem~\ref{thm1}.]
Let $\lambda\geq E_1$. Let $(y_j, R_j)$ be the sequences defined 
in Definition~\ref{def1}. Let $\xi_j$ be the sequence of smooth functions 
defined in the above proposition with $\lambda$ replaced by $\lambda-E_1$. 
Without loss of generality, we may assume that the support of $\xi_j$ 
is contained in the ball of radius $R_j$ in the Euclidean space.  
Since the exponential map at $y_i$ 
in Lemma~\ref{lem1} provides a diffeomorphism 
up to the ball of radius $R_j$, $\xi_j$ 
can also be viewed as smooth  functions on $B_{y_j}(R_j)$ and hence on $\Sigma$.

Let $\sigma_1$ be the first eigenfunction of $\omega$. 
Then the sequence $\psi_j:=\xi_j\otimes\sigma_1$ 
is a sequence of functions 
on $\Sigma\times\omega$. By Definition~\ref{def1}, 
these functions can be viewed as smooth functions on $\Omega$ as well. 
We choose the sequence $\phi_j:=\psi_j/\|\psi_j\|_G$ as the singular sequence. 
It is apparent that the sequence satisfies 
properties~1 and~3 of Theorem~\ref{Thm.Weyl.bis}, 
where the operator~$H$ in that theorem is taken for $-\Delta_G$.

By Definition~\ref{def1},
there exists a positive sequence $\eps_j\to 0$ 
such that on each $\pi^{-1}(B_{y_j}(R_j))$
\begin{equation}\label{impo-3}
  (1-\eps_j) G_0 \leq G \leq (1+\eps_j) G_0.
\end{equation}
In order to verify~(2) of Theorem~\ref{Thm.Weyl.bis}, we establish the identity
\begin{equation*}
\begin{aligned}
  \big( \varphi,(-\Delta_G-\lambda)\phi_j \big)_G
  =\ & \big( \varphi,(-\Delta_{G_0}-\lambda)\phi_j \big)_{G_0}
  + \big(\partial_M\varphi, G^{MK} A_{K}^{\ N} \, \partial_N\phi_{j}\big)_G
  - \lambda \, \big(\varphi, B \phi_{j}\big)_G
\end{aligned}
\end{equation*}
for any $\phi_j$, where $\varphi$ is a 
smooth function on $\Omega$ with compact support and
$$
  A_{K}^{\ N} := \delta_K^{N} - (\det G)^{-1/2} \, (\det G_0)^{1/2} \, G_{KL} \, G_0^{LN}
  \qquad \mbox{and} \qquad
  B := 1 - (\det G)^{-1/2} \, (\det G_0)^{1/2}
  \,.
$$
Consequently,

\begin{equation*}
\begin{aligned}
  \big|\big( \varphi,(-\Delta_G-\lambda)\phi_j \big)_G\big|
  \leq \ &\big|\big( \varphi,(-\Delta_{G_0}-\lambda)\phi_j \big)_{G_0}\big|
  \\
  &+ \|\nabla_{\!G}\varphi\|_G \, 
  \Big(\sup_{\Sigma \setminus B_{y_j}(R_j)} |A| \Big) 
  \, \|\nabla_{\!G}\phi_{j}\|_G
  \\
  &+ \lambda \, \big\|\varphi\|_G  \, 
  \Big(\sup_{\Sigma \setminus B_{y_j}(R_j)} |B|\Big) 
  \, \|\phi_{j}\big\|_G
  \,,
\end{aligned}
\end{equation*}
where~$|A|$ denotes the matrix (operator) norm of 
$A : \Real^{\dmn+\cdmn} \to \Real^{\dmn+\cdmn}$ (defined by $(A_K^N)$) 
and~$|B|$ is just the absolute value of the function~$B$. 
By~\eqref{impo-3}, both $A^N_K$ and $B$ tend to zero. 
Moreover,   
\[
\|\nabla_{\!G}\phi_j\|_G
\leq C \|\nabla_{\!G_0}\phi_j\|_{G_0}
\leq C\big(\|\xi_j\|_g+\|\nabla_g \xi_j\|_g\big)
\,,
\qquad \|\phi_j\|_G\leq C\|\phi_j\|_{G_0}
\,.
\]
Here and in the sequel, $C$~denotes a $j$-independent constant 
that may vary from line to line.

By Lemma~\ref{lem1},
$(1-\eps_j) g_{\mathbb R^\dmn}\leq g\leq (1+\eps_j) g_{\mathbb R^\dmn}$  
on $B_{y_j}(R_j)$ for the sequence $\eps_j\to 0$. Therefore
\[
\|\nabla_{\!g} \xi_j\|_g+\|\xi_j\|_g\leq C\|\xi_j\|_{H^1(\mathbb R^\dmn)}\leq C.
\]
Thus we have
\[
 \big|\big( \varphi,(-\Delta_G-\lambda)\phi_j \big)_G\big|
  \leq \big|\big( \varphi,(-\Delta_{G_0}-\lambda)\phi_j \big)_{G_0}\big|+o(1)
\]
where $o(1)$ tends to zero as $j\to\infty$.

To complete the proof, we first observe that 
\[
\big( \varphi,(-\Delta_{G_0}-\lambda)\phi_j \big)_{G_0}
= \big(\tilde \varphi, \Delta_g \xi_j +(\lambda-E_1)\xi_j\big)_g,
\]
where $\tilde\varphi$ is obtained from $\varphi$ by integrating over  the fiber.
 Similar to the above, we let
\begin{equation*}
\tilde A_k^l:=\delta_k^l-(\det g)^{-1/2} g_{kl}
\qquad\mbox{and}\qquad
\tilde B:=1-(\det g)^{-1/2} \,.
\end{equation*}
%
Then we have
\[
\big(\tilde\varphi, \Delta_g f_j +(\lambda-E_1)\xi_j\big)_g
=\big(\tilde\varphi, \Delta_g f_j +(\lambda-E_1)f_j\big)_{L^2({\mathbb R^\dmn})}
+ \big(\partial_i\tilde\varphi, G^{ik} \tilde A_{k}^{l} \, \partial_l f_{j}\big)_g
  - (\lambda-E_1) \, \big(\tilde \varphi, \tilde B \phi_{j}\big)_g.
  \]
Since the second fundamental form of $\Sigma$ tends to zero, 
so does $\tilde A_k^l,\tilde B$. By an argument similar to the above, 
we conclude that the left hand side of the above inequality tends to zero 
which  completes the proof.
\end{proof}

\begin{proof}[Proof of Theorem~\ref{thm2}.]
It follows from Definition~\ref{def4}, 
Equation~\eqref{Gib} and Lemma~\ref{lem1}
that the metric~$G$ is a local deformation of~$G_0$. 
Consequently, Theorems~\ref{thm3} and~\ref{thm1} 
yield Theorem~\ref{thm2} as a direct corollary. 
\end{proof}

\appendix
\section{The Weyl criterion for quadratic forms}\label{App}
%
Let~$H$ be a self-adjoint operator on a Hilbert space~$\Hilbert$.
The norm and inner product in~$\Hilbert$ are respectively
denoted by~$\|\cdot\|$ and $(\cdot,\cdot)$. 
The classical Weyl criterion can be stated as follows.
\begin{Theorem}[Classical Weyl's criterion]\label{Thm.Weyl}
A point~$\lambda$ belongs to~$\sigma(H)$ if, and only if,
there exists a sequence $\{\psi_n\}_{n \in \Nat} \subset \Dom(H)$
such that
\begin{enumerate}
\item
$
  \forall n\in\Nat, \quad
  \|\psi_n\|=1
$\,,
\item
$
  (H-\lambda)\psi_n \xrightarrow[n\to\infty]{} 0
$
\quad in \quad $\Hilbert$.
\end{enumerate}
Moreover, $\lambda$ belongs to~$\sigma_\mathrm{ess}(H)$ if, and only if,
in addition to the above properties
\begin{enumerate}
\setcounter{enumi}{2}
\item
$
  \psi_n \xrightarrow[n\to\infty]{w} 0
$
\quad in \quad $\Hilbert$.
\end{enumerate}
\end{Theorem}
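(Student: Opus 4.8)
The plan is to deduce the criterion from the spectral theorem for the self-adjoint operator~$H$, writing $H = \int_{\Real} \mu \, dE(\mu)$ for its projection-valued spectral measure~$E$. The facts I would invoke are elementary consequences of the functional calculus: (a)~$\lambda \in \sigma(H)$ if and only if $E\big((\lambda-\eps,\lambda+\eps)\big) \neq 0$ for every $\eps > 0$; (b)~$\lambda \in \sigma_\mathrm{ess}(H)$ if and only if $\operatorname{ran} E\big((\lambda-\eps,\lambda+\eps)\big)$ is infinite-dimensional for every $\eps > 0$ (equivalently, $\lambda$ is not an isolated eigenvalue of finite multiplicity); and (c)~a finite-rank orthogonal projection is compact.

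For the first equivalence, the ``only if'' part is immediate by contraposition: if $\lambda \notin \sigma(H)$ then $(H-\lambda)^{-1}$ is bounded, and a sequence with $\|\psi_n\| = 1$, $(H-\lambda)\psi_n \to 0$ would give $\psi_n = (H-\lambda)^{-1}(H-\lambda)\psi_n \to 0$, a contradiction. For the ``if'' part, assume $\lambda \in \sigma(H)$; by~(a) I would choose for each~$n$ a unit vector $\psi_n$ in $\operatorname{ran} E\big((\lambda-\tfrac1n,\lambda+\tfrac1n)\big) \subset \Dom(H)$, and then the functional calculus gives $\|(H-\lambda)\psi_n\| \leq \tfrac1n \to 0$, so properties~1 and~2 hold.

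For the second equivalence, in the forward direction I would additionally make the $\psi_n$ pairwise orthogonal: once $\psi_1,\dots,\psi_{n-1}$ are chosen, fact~(b) ensures $\operatorname{ran} E\big((\lambda-\tfrac1n,\lambda+\tfrac1n)\big)$ is infinite-dimensional, hence meets $\{\psi_1,\dots,\psi_{n-1}\}^\perp$ in a nonzero vector, from which a unit~$\psi_n$ is taken; an orthonormal sequence tends to~$0$ weakly by Bessel's inequality, so property~3 holds. In the backward direction, suppose a sequence with properties~1--3 exists but $\lambda \notin \sigma_\mathrm{ess}(H)$; then $\lambda$ is an isolated eigenvalue of finite multiplicity, so there is $\eps > 0$ with $\sigma(H) \cap (\lambda-\eps,\lambda+\eps) = \{\lambda\}$ and $P := E(\{\lambda\})$ of finite rank. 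Decomposing $\psi_n = P\psi_n + (1-P)\psi_n$ and using $(H-\lambda)P = 0$ together with the lower bound $\|(H-\lambda)(1-P)\phi\| \geq \eps \, \|(1-P)\phi\|$ (valid on $\Dom(H)$ because the spectral measure of~$H$ on $\operatorname{ran}(1-P)$ avoids $(\lambda-\eps,\lambda+\eps)$), property~2 forces $\|(1-P)\psi_n\| \to 0$ and hence $\|P\psi_n\| \to 1$; but~$P$ is compact by~(c) and $\psi_n \rightharpoonup 0$, so $P\psi_n \to 0$ in norm, a contradiction.

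The whole argument is standard; the only delicate point is the backward implication in the essential-spectrum part, where one must use \emph{both} the spectral gap around~$\lambda$ (to control the $(1-P)$-component) \emph{and} the compactness of the finite-rank spectral projection~$P$ (to turn weak convergence into a contradiction). The rest is a routine application of the functional calculus, and no serious obstacle is anticipated.
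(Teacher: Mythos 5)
Your proof is correct. Note, however, that the paper does not prove this classical statement at all --- it explicitly defers to Weidmann, Sec.~7.4 --- so there is no in-paper argument to compare against line by line; what you have written is the standard spectral-theorem proof that the authors chose to omit. In substance it is the same machinery the paper does deploy in its appendix proof of the quadratic-form variant, Theorem~\ref{Thm.Weyl.bis}: there, too, the backward implications are obtained by bounding $\|(H-\lambda)\psi_n\|$ (in the $\Hilbert_{-1}$ norm) from below via the spectral family when $\lambda$ lies outside the (essential) spectrum, and the essential-spectrum step uses exactly your mechanism --- the finite-rank spectral projection near $\lambda$ is compact, so weak convergence of $\psi_n$ to zero forces strong convergence of the projected part, contradicting the normalization. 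The only differences are cosmetic: the paper works with the projection $E(\lambda+\eps)-E(\lambda-\eps)$ and a single spectral integral, whereas you split off $P=E(\{\lambda\})$ and invoke the gap estimate on $\mathrm{ran}(1-P)$ separately. Two small remarks on your write-up: you have interchanged the labels ``if'' and ``only if'' in the first equivalence (the contraposition argument proves that existence of the sequence implies $\lambda\in\sigma(H)$, which is the ``if'' half); and in the backward direction of the essential-spectrum part, the assertion that $\lambda$ must be an isolated eigenvalue of finite multiplicity tacitly uses that properties~1--2 already place $\lambda$ in $\sigma(H)$ --- legitimate here, since property~3 is assumed ``in addition'' to them, but worth making explicit. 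Neither point affects correctness.
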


The fact that the sequence satisfying the properties~1 and~2 
ensures that~$\lambda$ belongs to the spectrum of~$H$ 
is true for a general (not necessarily self-adjoint) closed operator.
It is a consequence of the spectral theorem that
the properties~1 and~2 represent a necessary condition too.
The sequence satisfying the items~1--3
is called a \emph{singular sequence}.
We refer to~\cite{Weidmann}*{Sec.~7.4}  for a proof of Theorem~\ref{Thm.Weyl}
and other equivalent statements. 

If one wants to use Theorem~\ref{Thm.Weyl} in the ``$\Leftarrow$'' sense,
\ie~to construct the required sequence in order to prove that
a point belongs to the (essential) spectrum, 
a defect of Theorem~\ref{Thm.Weyl} consists in that 
it requires to take the sequence from the operator domain
and to establish the strong limit of item~2.
In applications, however, it is sometimes useful to work with
associated quadratic forms instead of operators.

There is a one-to-one correspondence between closed symmetric forms
and self-adjoint operators which are bounded from below. 
Under the latter restriction, we may without loss of generality
assume that the operator~$H$ is non-negative
(since adding a sufficiently large constant will
make the operator positive).
Let $\Hilbert_{+1}$ denote the form domain $\Dom(H^{1/2})$  of~$H$, 
equipped with its graph norm 
$$
  \|\cdot\|_{+1} 
  := \big\|(H+1)^{1/2} \cdot\big\|
  = \sqrt{h[\cdot]+\|\cdot\|^2}
  \,,
$$
where~$h$ is the quadratic form associated with~$H$.  
Define~$\Hilbert_{-1}$ to be the dual space of~$\Hilbert_{+1}$
and let us denote the pairing
between~$\Hilbert_{+1}$ and~$\Hilbert_{-1}$ also by $(\cdot,\cdot)$. 
Then we have the embeddings
\begin{equation}\label{hooks}
  \Hilbert_{+1} \hookrightarrow \Hilbert \hookrightarrow \Hilbert_{-1}
\end{equation}
and~$\Hilbert_{-1}$ can be thought as the closure of~$\Hilbert$ 
in the norm
\begin{equation}\label{dual.norm}
  \|\cdot\|_{-1} 
  := \big\|(H+1)^{-1/2} \cdot\big\|
  = \sup_{\phi\in\Hilbert_{+1}\setminus\{0\}}
  \frac{|(\phi,\cdot)|}{\,\|\phi\|_{+1}} 
  \,.
\end{equation}

Now we are in a position to state an improved version
of the Weyl criterion in the case of operators bounded from below. 
\begin{Theorem}[Weyl's criterion for quadratic forms]\label{Thm.Weyl.bis}
Let~$H$ be non-negative.
A point~$\lambda$ belongs to~$\sigma(H)$ if, and only if,
there exists a sequence $\{\psi_n\}_{n \in \Nat} \subset \Dom(H^{1/2})$
such that
\begin{enumerate}
\item
$
  \forall n\in\Nat, \quad
  \|\psi_n\|=1
$\,,
\item
$
  (H-\lambda)\psi_n \xrightarrow[n\to\infty]{} 0
$
\quad in \quad $\Hilbert_{-1}$.
\end{enumerate}
Moreover, $\lambda$ belongs to~$\sigma_\mathrm{ess}(H)$ if, and only if,
in addition to the above properties
\begin{enumerate}
\setcounter{enumi}{2}
\item
$
  \psi_n \xrightarrow[n\to\infty]{w} 0
$
\quad in \quad $\Hilbert$.
\end{enumerate}%
\end{Theorem}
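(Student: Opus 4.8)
The plan is to prove Theorem~\ref{Thm.Weyl.bis} by showing that it is equivalent to the classical Weyl criterion (Theorem~\ref{Thm.Weyl}) via a regularization of the singular sequence. The ``only if'' directions are immediate: if $\lambda \in \sigma(H)$ then the classical criterion produces $\{\psi_n\} \subset \Dom(H) \subset \Dom(H^{1/2})$ with $\|\psi_n\|=1$ and $(H-\lambda)\psi_n \to 0$ in $\Hilbert$; since the embedding $\Hilbert \hookrightarrow \Hilbert_{-1}$ is continuous (indeed $\|\cdot\|_{-1} \le \|\cdot\|$), property~2 in the weaker topology follows, and property~3 is literally the same statement. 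So the content is entirely in the ``if'' directions, where we are handed a weaker sequence and must upgrade it.

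The key device is the bounded operator $R := (H+1)^{-1} : \Hilbert_{-1} \to \Hilbert_{+1}$, or more precisely its square root $(H+1)^{-1/2}$, which is an isometry from $\Hilbert_{-1}$ onto $\Hilbert$ and from $\Hilbert$ onto $\Hilbert_{+1}$. Given a sequence $\{\psi_n\} \subset \Hilbert_{+1}$ satisfying properties~1 and~2, I would set $\tilde\psi_n := (H+1)^{-1}\psi_n \in \Dom(H)$ and normalize, $\hat\psi_n := \tilde\psi_n / \|\tilde\psi_n\|$. The point is that $(H+1)^{-1}$ commutes with $H-\lambda$, so $(H-\lambda)\tilde\psi_n = (H+1)^{-1}(H-\lambda)\psi_n$; applying $(H+1)^{-1}$ to a sequence converging to zero in $\Hilbert_{-1}$ yields a sequence converging to zero in $\Hilbert$, because $(H+1)^{-1} = (H+1)^{-1/2}\,(H+1)^{-1/2}$ maps $\Hilbert_{-1} \to \Hilbert \to \Hilbert_{+1}$ isometrically at each stage and $\Hilbert_{+1} \hookrightarrow \Hilbert$ continuously. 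Thus $(H-\lambda)\tilde\psi_n \to 0$ in $\Hilbert$. The one thing that must be checked carefully is that $\|\tilde\psi_n\|$ stays bounded below away from zero, so that the normalization does not destroy the convergence: here one uses that $\|\psi_n\| = 1$ together with the decomposition $\psi_n = (H-\lambda)\tilde\psi_n + (\lambda+1)\tilde\psi_n$, which shows $1 \le \|(H-\lambda)\tilde\psi_n\| + |\lambda+1|\,\|\tilde\psi_n\|$; since the first term tends to zero, $\liminf_n \|\tilde\psi_n\| \ge 1/|\lambda+1| > 0$. Then $\hat\psi_n \in \Dom(H)$, $\|\hat\psi_n\|=1$, and $(H-\lambda)\hat\psi_n \to 0$ in $\Hilbert$, so the classical criterion gives $\lambda \in \sigma(H)$.

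For the essential-spectrum half, I would observe that weak convergence $\psi_n \xrightarrow{w} 0$ in $\Hilbert$ is preserved under the bounded operator $(H+1)^{-1}$, so $\tilde\psi_n \xrightarrow{w} 0$, and dividing by the bounded-below-and-above sequence $\|\tilde\psi_n\|$ keeps weak convergence to zero (a subsequence of $\|\tilde\psi_n\|$ converges to a positive limit, and one argues along subsequences, or simply notes $\|\tilde\psi_n\|$ is bounded above by $\|\psi_n\|=1$ since $(H+1)^{-1}$ is a contraction). Hence $\hat\psi_n$ satisfies all three conditions of the classical criterion and $\lambda \in \sigma_\mathrm{ess}(H)$. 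Conversely, the existence of a form-domain singular sequence in the sense of conditions~1--3 of Theorem~\ref{Thm.Weyl.bis} for $\lambda \in \sigma_\mathrm{ess}(H)$ follows from the classical criterion exactly as in the ``only if'' remark above.

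The main obstacle, such as it is, is the lower bound on $\|\tilde\psi_n\|$: this is the only place where the argument could fail (if $\lambda = -1$ it would be false, but $H$ is non-negative so $\lambda \ge 0$ and $|\lambda+1| \ge 1$, which is exactly why the non-negativity hypothesis is needed and why one shifts $H$ to be non-negative in the first place). Everything else is a routine tracking of which of the three Hilbert spaces $\Hilbert_{\pm1}, \Hilbert$ each object lives in, using that $(H+1)^{-1/2}$ intertwines them isometrically. I would also remark that this shows the form-domain criterion and the operator-domain criterion are genuinely equivalent, not merely that one implies the other, which is the useful point in applications where the natural candidate sequence lies only in the form domain.
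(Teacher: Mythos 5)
Your argument is correct in substance, but it takes a genuinely different route from the paper's. The paper proves the nontrivial (``if'') directions directly from the spectral theorem, by contradiction: if $\lambda\notin\sigma(H)$ (respectively, if the spectral projection $E(\lambda+\epsilon)-E(\lambda-\epsilon)$ has finite rank, hence is compact and sends the weakly null $\psi_n$ strongly to zero), then $\|(H-\lambda)\psi_n\|_{-1}^2=\int(t-\lambda)^2(t+1)^{-1}\,d\|E(t)\psi_n\|^2$ is bounded below by $\epsilon^2(\lambda+\epsilon+1)^{-1}\|\psi_n\|^2$ (respectively, by the same constant times $\|\psi_n\|^2-\|(E(\lambda+\epsilon)-E(\lambda-\epsilon))\psi_n\|^2$), contradicting properties~1--2; this is a direct modification of the classical proof and yields a quantitative resolvent-type lower bound with no auxiliary sequence. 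You instead reduce to the classical criterion by regularizing, $\tilde\psi_n:=(H+1)^{-1}\psi_n\in\Dom(H)$, exploiting that $(H+1)^{-1/2}$ is an isometry $\Hilbert_{-1}\to\Hilbert\to\Hilbert_{+1}$; this buys a clean statement that the form-domain and operator-domain criteria are equivalent, at the cost of the normalization bookkeeping. Two points to tighten: (i) the identity $(H-\lambda)(H+1)^{-1}\psi_n=(H+1)^{-1}\bigl((H-\lambda)\psi_n\bigr)$, with $(H-\lambda)\psi_n$ understood in $\Hilbert_{-1}$ and $(H+1)^{-1}$ its continuous extension $\Hilbert_{-1}\to\Hilbert_{+1}$, deserves a line of justification (approximate $\psi_n$ by $E(k)\psi_n$ and use that $(H-\lambda)(H+1)^{-1}$ is bounded on $\Hilbert$ because $H\ge 0$); and (ii) your dismissal of the exceptional value $\lambda=-1$ via ``$H\ge 0$ so $\lambda\ge 0$'' is not available in the ``if'' direction, where $\lambda\in\sigma(H)$ is the conclusion rather than a hypothesis; the one-line patch is that for $\lambda=-1$ property~2 can never hold, since $\|(H+1)\psi_n\|_{-1}=\|(H+1)^{1/2}\psi_n\|\ge\|\psi_n\|=1$, while for every other real $\lambda$ your bound $\liminf_n\|\tilde\psi_n\|\ge|\lambda+1|^{-1}>0$ goes through and the argument is sound (for $\lambda<0$ it simply shows no such sequence exists).
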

\begin{proof}
By Theorem~\ref{Thm.Weyl} and~\eqref{hooks}, 
the direction ``$\Rightarrow$'' is trivial.
Let us assume the existence of a sequence $\{\psi_n\}_{n \in \Nat}$
satisfying~1 and~2, but $\lambda \not\in \sigma(H)$. 
Then  
$
  E(\lambda+\epsilon) - E(\lambda-\epsilon) = 0 
$ 
for some $\epsilon\in(0,1)$, 
where~$E$ denote the spectral family of~$H$.  
We have
\begin{equation*}
  \big\|(H-\lambda)\psi_n\big\|_{-1}^2 
  = \big\|(H-\lambda)(H+1)^{-1/2}\psi_n\big\|^2 
  = \int_{\Real} \frac{(t-\lambda)^2}{t+1} \, d\|E(t)\psi_n\|^2
  \geq \frac{\epsilon^2}{\lambda+\epsilon+1} \, \|\psi_n\|^2
  \,,
\end{equation*}
where the inequality follows from the fact that
$$
  \frac{(t-\lambda)^2}{t+1}
  \geq \min\left\{
  \frac{\epsilon^2}{\lambda-\epsilon+1},\frac{\epsilon^2}{\lambda+\epsilon+1}
  \right\}
  = \frac{\epsilon^2}{\lambda+\epsilon+1}
$$
almost everywhere relative to the measure induced by $\|E(t)\psi_n\|^2$.
This is a contradiction with~1 and~2;
hence, 1~and~2 implies $\lambda\in\sigma(H)$.   
It remains to show that $\lambda\in\sigma_\mathrm{ess}(H)$
if in addition~3 holds. 
By contradiction, let as assume that 
$
  \dim \Ran\big(E(\lambda+\epsilon) - E(\lambda-\epsilon)\big) < \infty
$
for some $\epsilon\in(0,1)$, \ie,
the projection
$
  E(\lambda+\epsilon) - E(\lambda-\epsilon)
$ 
is compact.
Then
$
  \big(E(\lambda+\epsilon) - E(\lambda-\epsilon)\big)\psi_n 
  \to 0
$ 
strongly as $n\to\infty$. 
Consequently,
\begin{align*}
  \big\|(H-\lambda)\psi_n\big\|_{-1}^2 
  = \int_{\Real} \frac{(t-\lambda)^2}{t+1} \, d\|E(t)\psi_n\|^2
  &\geq \frac{\epsilon^2}{\lambda+\epsilon+1}
  \left[
  \int_{\Real} d\|E(t)\psi_n\|^2
  - \int_{\lambda-\epsilon}^{\lambda+\epsilon} d\|E(t)\psi_n\|^2
  \right]
  \\
  &= \frac{\epsilon^2}{\lambda+\epsilon+1}
  \left[
  \|\psi_n\|^2
  - \big\|\big(E(\lambda+\epsilon) - E(\lambda-\epsilon)\big)\psi_n \big\|^2
  \right]
  \,,
\end{align*}
and thus
$$
  \liminf_{n\to\infty} \big\|(H-\lambda)\psi_n\big\|_{-1}^2 
  \geq \frac{\epsilon^2}{\lambda+\epsilon+1}
  > 0 
  \,.
$$
This is a contradiction with~2.
\end{proof}
\begin{Remark}
Theorem~\ref{Thm.Weyl.bis} can be found (without proof) in \cite{DDi}. 
D.K. is grateful to V.~Iftimie \cite{Iftimie-private} 
for letting him known about this version
of Weyl's criterion and the proof, 
which is in fact a straightforward modification
of the classical proof of Theorem~\ref{Thm.Weyl} 
(\cf~\cite{Weidmann}*{proofs of Thms.~7.22 and 7.24}).
\end{Remark}
%

\subsection*{Acknowledgement}
The work has been partially supported 
by RVO61389005, the GACR grant No.\ P203/11/0701,
and the NSF award DMS-1206748.

%
%

\begin{bibdiv}
\begin{biblist}

\bib{nelia}{article}{
   author={Charalambous, Nelia},
   title={On the $L^p$ independence of the spectrum of the Hodge
   Laplacian on non-compact manifolds},
   journal={J. Funct. Anal.},
   volume={224},
   date={2005},
   number={1},
   pages={22--48},
   issn={0022-1236},
   review={\MR{2139103 (2006e:58044)}},
}

\bib{CEK}{article}{
   author={Carron, G.},
   author={Exner, P.},
   author={Krej{\v{c}}i{\v{r}}{\'{\i}}k, D.},
   title={Topologically nontrivial quantum layers},
   journal={J. Math. Phys.},
   volume={45},
   date={2004},
   number={2},
   pages={774--784},
   issn={0022-2488},
   review={\MR{2029097 (2005b:58046)}},
   doi={10.1063/1.1635998},
}

\bib{cheeger}{book}{
   author={Cheeger, Jeff},
   author={Ebin, David G.},
   title={Comparison theorems in Riemannian geometry},
   note={North-Holland Mathematical Library, Vol. 9},
   publisher={North-Holland Publishing Co.},
   place={Amsterdam},
   date={1975},
   pages={viii+174},
   review={\MR{0458335 (56 \#16538)}},
}

\bib{C-L}{article}{
   author={Chen, Zhi Hua},
   author={Lu, Zhi Qin},
   title={Essential spectrum of complete Riemannian manifolds},
   journal={Sci. China Ser. A},
   volume={35},
   date={1992},
   number={3},
   pages={276--282},
   issn={1001-6511},
   review={\MR{1183713 (93k:58221)}},
}

\bib{ChDFK}{article}{
   author={Chenaud, B.},
   author={Duclos, P.},
   author={Freitas, P.},
   author={Krej{\v{c}}i{\v{r}}{\'{\i}}k, D.},
   title={Geometrically induced discrete spectrum in curved tubes},
   journal={Differential Geom. Appl.},
   volume={23},
   date={2005},
   number={2},
   pages={95--105},
   issn={0926-2245},
   review={\MR{2158038 (2006e:81063)}},
   doi={10.1016/j.difgeo.2005.05.001},
}

\bib{CFKS-2nd}{book}{
   author={Cycon, H. L.},
   author={Froese, R. G.},
   author={Kirsch, W.},
   author={Simon, B.},
   title={Schr\"odinger operators with application to quantum mechanics and
   global geometry},
   series={Texts and Monographs in Physics},
   edition={Springer Study Edition},
   publisher={Springer-Verlag},
   place={Berlin},
   date={1987},
   pages={x+319},
   isbn={3-540-16758-7},
   review={\MR{883643 (88g:35003)}},
}

\bib{DDi}{article}{
   author={Dermenjian, Yves},
   author={Durand, Marc},
   author={Iftimie, Viorel},
   title={Spectral analysis of an acoustic multistratified perturbed
   cylinder},
   journal={Comm. Partial Differential Equations},
   volume={23},
   date={1998},
   number={1-2},
   pages={141--169},
   issn={0360-5302},
   review={\MR{1608508 (99a:35035)}},
}

\bib{donnelly}{article}{
   author={Donnelly, Harold},
   title={On the essential spectrum of a complete Riemannian manifold},
   journal={Topology},
   volume={20},
   date={1981},
   number={1},
   pages={1--14},
   issn={0040-9383},
   review={\MR{592568 (81j:58081)}},
   doi={10.1016/0040-9383(81)90012-4},
}

	\bib{donnelly-1}{article}{
   author={Donnelly, Harold},
   title={Exhaustion functions and the spectrum of Riemannian manifolds},
   journal={Indiana Univ. Math. J.},
   volume={46},
   date={1997},
   number={2},
   pages={505--527},
   issn={0022-2518},
   review={\MR{1481601 (99b:58230)}},
   doi={10.1512/iumj.1997.46.1338},
}

\bib{DEK2}{article}{
   author={Duclos, P.},
   author={Exner, P.},
   author={Krej{\v{c}}i{\v{r}}{\'{\i}}k, D.},
   title={Bound states in curved quantum layers},
   journal={Comm. Math. Phys.},
   volume={223},
   date={2001},
   number={1},
   pages={13--28},
   issn={0010-3616},
   review={\MR{1860757 (2002j:58051)}},
   doi={10.1007/PL00005582},
}

\bib{E}{article}{
   author={Escobar, Jos{\'e} F.},
   title={On the spectrum of the Laplacian on complete Riemannian manifolds},
   journal={Comm. Partial Differential Equations},
   volume={11},
   date={1986},
   number={1},
   pages={63--85},
   issn={0360-5302},
   review={\MR{814547 (87a:58155)}},
   doi={10.1080/03605308608820418},
}

\bib{ef}{article}{
   author={Escobar, Jos{\'e} F.},
   author={Freire, Alexandre},
   title={The spectrum of the Laplacian of manifolds of positive curvature},
   journal={Duke Math. J.},
   volume={65},
   date={1992},
   number={1},
   pages={1--21},
   issn={0012-7094},
   review={\MR{1148983 (93d:58174)}},
   doi={10.1215/S0012-7094-92-06501-X},
}

\bib{EK3}{article}{
   author={Exner, P.},
   author={Krej{\v{c}}i{\v{r}}{\'{\i}}k, D.},
   title={Bound states in mildly curved layers},
   journal={J. Phys. A},
   volume={34},
   date={2001},
   number={30},
   pages={5969--5985},
   issn={0305-4470},
   review={\MR{1857842 (2003g:81050)}},
   doi={10.1088/0305-4470/34/30/308},
}

\bib{gray}{book}{
   author={Gray, Alfred},
   title={Tubes},
   series={Progress in Mathematics},
   volume={221},
   edition={2},
   note={With a preface by Vicente Miquel},
   publisher={Birkh\"auser Verlag},
   place={Basel},
   date={2004},
   pages={xiv+280},
   isbn={3-7643-6907-8},
   review={\MR{2024928 (2004j:53001)}},
   doi={10.1007/978-3-0348-7966-8},
}
\bib{Iftimie-private}{unpublished}{
author={Iftimie, Viorel},
title={private communication},
year={2002},
}

\bib{these}{thesis}{
   author={D.~Krej\v{c}i\v{r}\'{\i}k},
   title={Guides d'ondes quantiques bidimensionnels},
   school={Facultas Mathematica Physicaque,
           Universitas Carolina Pragensis;
       Facult\'e des Sciences et Techniques,
       Universit\'e de Toulon et du Var},
   year={2001},
   note={Supervisors: P.~Duclos and P.~Exner},
}

\bib{moi}{article}{
   author={Krej{\v{c}}i{\v{r}}{\'{\i}}k, David},
   title={Twisting versus bending in quantum waveguides},
   conference={
      title={Analysis on graphs and its applications},
   },
   book={
      series={Proc. Sympos. Pure Math.},
      volume={77},
      publisher={Amer. Math. Soc.},
      place={Providence, RI},
   },
   date={2008},
   pages={617--637},
   review={\MR{2459893 (2010g:81112)}},
   note={See arXiv:0712.3371v2 [math-ph] (2009) for a corrected version},
}

\bib{KKriz}{article}{
   author={D.~Krej\v{c}i\v{r}\'{\i}k and J.~K\v{r}\'{\i}\v{z}},
   title={On the spectrum of curved quantum waveguides},
   journal={Publ.~RIMS, Kyoto University},
   year={2005},
   volume={41},
   pages={757--791},
   ,}

\bib{ku}{book}{
   author={K{\"u}hnel, Wolfgang},
   title={Differential geometry},
   series={Student Mathematical Library},
   volume={16},
   note={Curves---surfaces---manifolds;
   Translated from the 1999 German original by Bruce Hunt},
   publisher={American Mathematical Society},
   place={Providence, RI},
   date={2002},
   pages={x+358},
   isbn={0-8218-2656-5},
   review={\MR{1882174 (2002k:53001)}},
}

\bib{jli}{article}{
   author={Li, Jia Yu},
   title={Spectrum of the Laplacian on a complete Riemannian manifold with
   nonnegative Ricci curvature which possess a pole},
   journal={J. Math. Soc. Japan},
   volume={46},
   date={1994},
   number={2},
   pages={213--216},
   issn={0025-5645},
   review={\MR{1264938 (95g:58248)}},
   doi={10.2969/jmsj/04620213},
}

\bib{LL2}{article}{
   author={Lin, Christopher},
   author={Lu, Zhiqin},
   title={On the discrete spectrum of generalized quantum tubes},
   journal={Comm. Partial Differential Equations},
   volume={31},
   date={2006},
   number={10-12},
   pages={1529--1546},
   issn={0360-5302},
   review={\MR{2273964 (2008c:58023)}},
   doi={10.1080/03605300600635111},
}

\bib{LL1}{article}{
   author={Lin, Christopher},
   author={Lu, Zhiqin},
   title={Existence of bound states for layers built over hypersurfaces in
   $\mathbb R^{n+1}$},
   journal={J. Funct. Anal.},
   volume={244},
   date={2007},
   number={1},
   pages={1--25},
   issn={0022-1236},
   review={\MR{2294473 (2008b:81079)}},
   doi={10.1016/j.jfa.2006.11.017},
}

\bib{LCM}{book}{
   author={Londergan, J.~T.},
   author={Carini, J.~P.},
   author={Murdock, D.~P.},
   title={Binding and Scattering in Two-Dimensional Systems},
   series={LNP},
   volume={m60},
   year={1999},
   address={Berlin},
   publisher={Springer},
}

\bib{lu}{article}{
   author={Lu, Zhiqin},
   title={Normal scalar curvature conjecture and its applications},
   journal={J. Funct. Anal.},
   volume={261},
   date={2011},
   number={5},
   pages={1284--1308},
   issn={0022-1236},
   review={\MR{2807100 (2012f:53112)}},
   doi={10.1016/j.jfa.2011.05.002},
}

\bib{Lu-Zhou_2011}{article}{
   author={Lu, Zhiqin},
   author={Zhou, Detang},
   title={On the essential spectrum of complete non-compact manifolds},
   journal={J. Funct. Anal.},
   volume={260},
   date={2011},
   number={11},
   pages={3283--3298},
   issn={0022-1236},
   review={\MR{2776570 (2012e:58058)}},
   doi={10.1016/j.jfa.2010.10.010},
}

\bib{milnor-2}{book}{
   author={Milnor, J.},
   title={Morse theory},
   series={Based on lecture notes by M. Spivak and R. Wells. Annals of
   Mathematics Studies, No. 51},
   publisher={Princeton University Press},
   place={Princeton, N.J.},
   date={1963},
   pages={vi+153},
   review={\MR{0163331 (29 \#634)}},
}

\bib{milnor}{article}{
   author={Milnor, J. W.},
   title={On the total curvature of knots},
   journal={Ann. of Math. (2)},
   volume={52},
   date={1950},
   pages={248--257},
   issn={0003-486X},
   review={\MR{0037509 (12,273c)}},
}

\bib{RS1}{book}{
   author={Reed, M.},
   author={Simon, B.},
   title={Methods of Modern Mathematical Physics,
          {I}.~{F}unctional Analysis},
   publisher={Academic Press},
   address={New York},
   year={1972},
}

\bib{sturm}{article}{
   author={Sturm, Karl-Theodor},
   title={On the $L^p$-spectrum of uniformly elliptic operators on
   Riemannian manifolds},
   journal={J. Funct. Anal.},
   volume={118},
   date={1993},
   number={2},
   pages={442--453},
   issn={0022-1236},
   review={\MR{1250269 (94m:58227)}},
   doi={10.1006/jfan.1993.1150},
}

\bib{Wachsmuth-Teufel}{article}{
   author={Wachsmuth, J.},
   author={Teufel, S.},
   title={Effective {H}amiltonians for constrained quantum systems},
   journal={},
   year={},
   note={preprint on arXiv:0907.0351v3 [math-ph] (2009)},
}

\bib{wang}{article}{
   author={Wang, Jiaping},
   title={The spectrum of the Laplacian on a manifold of nonnegative Ricci
   curvature},
   journal={Math. Res. Lett.},
   volume={4},
   date={1997},
   number={4},
   pages={473--479},
   issn={1073-2780},
   review={\MR{1470419 (98h:58194)}},
}

\bib{Weidmann}{book}{
   author={Weidmann, Joachim},
   title={Linear operators in Hilbert spaces},
   series={Graduate Texts in Mathematics},
   volume={68},
   note={Translated from the German by Joseph Sz\"ucs},
   publisher={Springer-Verlag},
   place={New York},
   date={1980},
   pages={xiii+402},
   isbn={0-387-90427-1},
   review={\MR{566954 (81e:47001)}},
}

\bib{Z}{article}{
   author={Zhou, De Tang},
   title={Essential spectrum of the Laplacian on manifolds of nonnegative
   curvature},
   journal={Internat. Math. Res. Notices},
   date={1994},
   number={5},
   pages={209 ff., approx.\ 6 pp.\ (electronic)},
   issn={1073-7928},
   review={\MR{1270134 (95g:58250)}},
   doi={10.1155/S1073792894000231},
}

\end{biblist}
\end{bibdiv}
\end{document}